\documentclass[11pt,letterpaper,reqno]{amsart}
\usepackage{tikz}
\usetikzlibrary{positioning, shapes.geometric, arrows.meta}
\usepackage{amssymb}
\usepackage{amsmath}
\usepackage{amsthm}
\usepackage{amsfonts}
\IfFileExists{bbm.sty}{\usepackage{bbm}}{}
\usepackage{enumitem}
\usepackage{pgfplots}
\pgfplotsset{compat=1.18}
\usepackage{booktabs,tabularx,array}
\usepackage{graphicx}
\usepackage[T1]{fontenc}
\usepackage{doi}
\usepackage{float}
\usepackage[dvipsnames]{xcolor}
\usepackage{bookmark}
\usepackage{hyperref}
\hypersetup{
	colorlinks=true,
linkcolor=RoyalBlue,
citecolor=ForestGreen!65!white,
urlcolor=BrickRed,
  pdftitle={Hereditary Embeddings of Uniform Roe Algebras: Counterexamples and Compact-Ghost Strong Rigidity},
  pdfauthor={Teng Zhang},
  pdfkeywords={uniform Roe algebra, coarse embedding, hereditary subalgebra, expander graph, ghost projection, Hall's marriage theorem}
}

\newtheorem{thm}{Theorem}[section]
\newtheorem{lem}[thm]{Lemma}
\newtheorem{prop}[thm]{Proposition}
\newtheorem{cor}[thm]{Corollary}

\newtheorem{prob}[thm]{Problem}

\theoremstyle{definition}

\newtheorem{defin}[thm]{Definition}
\numberwithin{equation}{section}

\newcommand{\Cu}{C_u^*}
\newcommand{\cK}{\mathcal K}

\newcommand{\girth}{\operatorname{girth}}

\begin{document}

\title[Embedding Rigidity Problem]
{On the Embedding Rigidity Problem for Uniformly Locally Finite Coarse Spaces}

\author[T.~Zhang]{Teng Zhang}

\address{School of Mathematics and Statistics, Xi'an Jiaotong University, Xi'an 710049, P. R. China}
\email{teng.zhang@stu.xjtu.edu.cn}

\subjclass[2020]{Primary 46L05; Secondary 51F30, 05C80, 46L85}

\keywords{uniform Roe algebra; coarse embedding; hereditary subalgebra; expander graph; ghost projection; Hall's marriage theorem}

\thanks{Teng Zhang is supported by the China Scholarship Council, the Young Elite Scientists Sponsorship Program for PhD Students (China Association for Science and Technology), and the Fundamental Research Funds for the Central Universities at Xi'an Jiaotong University (Grant No.~xzy022024045).}
\begin{abstract}
In this paper, we construct countable uniformly locally finite metric spaces
\(X\) and \(Y\) such that \(\Cu(X)\) is isomorphic to a hereditary
\(C^*\)-subalgebra of \(\Cu(Y)\), while \(X\) does not coarsely embed into
\(Y\). This gives a negative answer to the embedding rigidity problem for
uniformly locally finite coarse spaces. On the positive side, we prove that
if every sparse subspace of \(Y\) yields only compact ghost projections, then
any isomorphism of \(\Cu(X)\) onto a hereditary subalgebra of \(\Cu(Y)\)
induces an injective coarse embedding \(X\to Y\). This strengthens a main result in
\cite{BFV20} by upgrading coarse embeddability to injective
coarse embeddability under the same hypothesis.
\end{abstract}

\maketitle
\enlargethispage{4pt}

\section{Introduction}

Uniform Roe algebras are fundamental objects that encode the large-scale
geometry of discrete spaces within the framework of \(C^*\)-algebras.
Their origins can be traced back to John Roe's early work on index theory
for open manifolds \cite{Roe88,Roe93}; for a modern treatment, see
\cite[Chapters~2 and~4]{Roe03}. Since then, they have been studied
extensively because of their deep connections with geometric group theory
and higher index theory, which have led to important applications in
manifold topology and geometry \cite{Roe96}. More recently, uniform Roe
algebras have also found applications in the study of topological phases
of matter \cite{Kub17} and, more broadly, in mathematical physics
\cite{BE26}.

The rigidity problem asks which large-scale properties of a coarse space \(X\)
are encoded by its uniform Roe algebra, and is motivated in part by the
Baum--Connes and Novikov conjectures \cite{BCL20,Roe96,HR95}. Bijective
coarse equivalences induce isomorphisms of uniform Roe algebras, while
injective coarse embeddings induce embeddings onto hereditary
\(C^*\)-subalgebras. More generally, coarse equivalences and coarse
embeddings induce the corresponding statements after stabilization. The first
fundamental question concerns isomorphism rigidity: if
\(C_u^*(X,\mathcal E)\) and \(C_u^*(Y,\mathcal F)\) are isomorphic, must
\((X,\mathcal E)\) and \((Y,\mathcal F)\) be bijectively coarsely equivalent?
For rigidity results on bounded-geometry metric spaces, see
\cite{SW13,BBFKVW22,Vig26,Kru26}; for arbitrary uniformly locally finite
coarse spaces, see \cite{BF21,BFV22}; for Cartan rigidity, see
\cite{WW20}; and for \(C^*\)-rigidity, see \cite{MV25}.
Very recently, the author  \cite{Zha26} completely resolved this problem by establishing
strong rigidity. This result essentially completes the
isomorphism-rigidity picture for uniform Roe algebras. Another important question is the following embedding rigidity problem, see \cite[Problem~1.2]{BFV22}.

\begin{prob}[Embedding rigidity problem]\label{prob:main}
	Let \((X,\mathcal E)\) and \((Y,\mathcal F)\) be uniformly locally finite
	coarse spaces. If \(C_u^*(X,\mathcal E)\) is isomorphic, as a
	\(C^*\)-algebra, to a hereditary \(C^*\)-subalgebra of
	\(C_u^*(Y,\mathcal F)\), must \((X,\mathcal E)\) admit an (injective) coarse
	embedding into \((Y,\mathcal F)\)?
\end{prob}

An affirmative answer yielding an injective coarse embedding is referred to
as \emph{strong embedding rigidity}. If one can only conclude that
\((X,\mathcal E)\) coarsely embeds into \((Y,\mathcal F)\), we refer to this
as \emph{weak embedding rigidity}.

The first purpose of this paper is to show that the answer to Problem~\ref{prob:main} is negative. More precisely,
we construct countable uniformly locally finite metric spaces \(X\) and \(Y\)
for which \(\Cu(X)\) is isomorphic to a hereditary \(C^*\)-subalgebra of
\(\Cu(Y)\), but \(X\) does not coarsely embed into \(Y\).

\begin{thm}\label{thm:main}
	There exist countable uniformly locally finite metric spaces $X$ and $Y$, a
	hereditary $C^*$-subalgebra $B\subseteq \Cu(Y)$, and a $*$-isomorphism
$
	\Phi\colon \Cu(X)\longrightarrow B
$
	such that there is no coarse embedding from $X$ into $Y$.
\end{thm}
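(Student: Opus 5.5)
Since the keywords mention expander graphs, ghost projections and Hall's marriage theorem, the plan is to exploit a non-compact ghost projection in \(\Cu(Y)\). Concretely, \(Y\) will be a sparse (coarse) disjoint union of expanders. Then the averaging projection \(P\), acting on each piece \(Y_n\) as projection onto the constants, is a non-compact ghost projection in \(\Cu(Y)\).

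\textbf{Construction.} Let \(X=\mathbb N\), with the sparse metric in which points \(n\neq m\) are at distance \(|n-m|+n+m\). Then every controlled operator is diagonal modulo finite-rank operators. Indeed, entourages of a sparse space are finite sets union the diagonal, so \(\Cu(X)\) is \(\ell^\infty(\mathbb N)+\cK\). Let \(Y=\bigsqcup_n Y_n\) be a box space, or disjoint union of expanders, with \(|Y_n|\to\infty\) and \(d(Y_n,Y_m)\to\infty\). Pick unit vectors \(\xi_n=|Y_n|^{-1/2}\chi_{Y_n}\) and let \(P=\sum_n \xi_n\otimes\xi_n^*\). Standard expander theory gives \(P\in\Cu(Y)\), as a norm limit of polynomials in the Laplacian, and \(P\) is a ghost. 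Take \(B=P\Cu(Y)P\), which is hereditary.

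\textbf{Computing \(B\).} Every \(T\in\Cu(Y)\) is block diagonal modulo compacts, because \(Y\) is a coarse disjoint union. Hence \(PTP=\sum_n\langle T\xi_n,\xi_n\rangle\,\xi_n\otimes\xi_n^*+K\) with \(K\) compact, where \(PKP\) is a compact operator on \(\overline{\mathrm{span}}\{\xi_n\}\cong\ell^2(\mathbb N)\). Conversely, \(P(\chi_{Y_n}f)P\) realizes arbitrary bounded diagonal coefficients with \(f\in\ell^\infty(Y)\). The rank-one operators \(\xi_n\otimes\xi_m^*\) lie in \(B\), since \(\xi_n\otimes\xi_m^*\) is a finite-propagation finite-rank operator. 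So under the unitary \(\ell^2(\mathbb N)\to P\ell^2(Y)\), \(\delta_n\mapsto\xi_n\), \(B\) is exactly \(\ell^\infty(\mathbb N)+\cK(\ell^2\mathbb N)\), which is \(\Cu(X)\). This defines \(\Phi\) as the spatial conjugation by this unitary.

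\textbf{Main obstacle: no coarse embedding.} The problem with this first attempt is that \(\mathbb N\) with a sparse metric does coarsely embed into \(Y\): send \(n\) to any point of \(Y_n\). So the construction must change so that \(X\) cannot embed. The cardinality and injectivity obstructions suggest the right change: make \(X\) locally bigger than \(Y\). Take \(X=\bigsqcup X_n\) sparse with uniformly bounded diameters, hence uniformly locally finite, and with \(|X_n|=k_n\). Replace the rank-one \(\xi_n\) by a rank-\(k_n\) family of orthonormal ghostlike vectors \(\xi_{n,1},\dots,\xi_{n,k_n}\) in \(\ell^2(Y_n)\). These can be chosen, for instance, via a spectral projection of the Laplacian of \(Y_n\) onto low eigenvalues, if the graphs are chosen with a spectral gap after the first \(k_n\) eigenvalues. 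Block-diagonality then gives \(P\Cu(Y)P\cong\prod_n M_{k_n}+\cK\cong\Cu(X)\), provided \(\chi_{Y_n}\Cu(Y)\chi_{Y_n}=B(\ell^2Y_n)\) compresses onto all of \(M_{k_n}\), which is automatic. The hard step is choosing the parameters so that \(P\in\Cu(Y)\) while \(X\not\to Y\). I would take \(k_n\to\infty\) with the \(X_n\) of diameter \(1\), forcing \(X\) to have unbounded local cardinality. That violates uniform local finiteness, so instead I would let \(X_n\) be arbitrary large-diameter spaces, for example copies of \(\mathbb Z^2\)-balls. These are metrically incompatible with \(Y\), since expanders of large girth contain no large grids coarsely uniformly, and the girth hypothesis does the work. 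The non-embedding would then follow from a Hall-type counting or girth obstruction: a uniform coarse embedding of the balls \(X_n\) into \(Y\) would force coarse images of large squares into locally tree-like graphs, which is impossible.
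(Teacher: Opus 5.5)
\textbf{There is a genuine gap: the repaired construction never establishes $B\cong\Cu(X)$.} Your first attempt (sparse $\mathbb N$, rank-one averaging projection) is computed correctly, and you rightly see that it fails because $X$ embeds. The trouble is the repair: the chain $P\Cu(Y)P\cong\prod_n M_{k_n}+\cK\cong\Cu(X)$ breaks at both links.

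Since $X$ is uniformly locally finite and $k_n=|X_n|\to\infty$, necessarily $\operatorname{diam}X_n\to\infty$. Then $\Cu(X)$ is a proper subalgebra of $\prod_n B(\ell^2X_n)+\cK$. To see this, pick $x_n,y_n\in X_n$ with $d(x_n,y_n)\to\infty$. The partial isometry $\sum_n e_{x_ny_n}$ lies in the product but is at distance at least $1$ from every finite-propagation operator. So if $B$ really were $\prod_n M_{k_n}+\cK$, conjugation by $\delta_{n,i}\mapsto\xi_{n,i}$ would not map $\Cu(X)$ onto $B$.

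Your step ``$\chi_{Y_n}\Cu(Y)\chi_{Y_n}=B(\ell^2Y_n)$ compresses onto $M_{k_n}$, which is automatic'' only gives blockwise surjectivity. Membership in $\Cu(Y)$ requires approximation with one propagation bound for all $n$ simultaneously, so blockwise surjectivity says nothing about which uniformly bounded sequences $(a_n)$ occur. What is actually needed is that compression by $P$ converts propagation in $Y$ into propagation in $X$ and back, uniformly in $n$. A spectral projection onto low Laplacian eigenvalues of $Y_n$ provides no such correspondence, because its eigenvectors are not attached to points of $X_n$ in any propagation-compatible way.

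\textbf{The missing idea is the paper's graph bundle.} Take $X_n=C_{L_n}$ and build $Y_n$ with a $1$-Lipschitz surjection $\pi_n\colon Y_n\to C_{L_n}$. Its fibers $F_x$ are $d$-regular expanders with a uniform gap $\gamma$, and its horizontal edges over each base edge form a perfect matching between adjacent fibers. Put $U\delta_x=|F_x|^{-1/2}\chi_{F_x}$. Then:
\begin{enumerate}
\item $q=UU^*$ is a spectral projection of the \emph{vertical} Laplacian, hence $q\in\Cu(Y)$.
\item The $1$-Lipschitz property gives $\operatorname{prop}(U^*TU)\le\operatorname{prop}(T)$ on the diagonal blocks, so $q\Cu(Y)q\subseteq U\Cu(X)U^*$.
\item Composing matchings along base paths lifts each partial translation of displacement $j$ on $X$ to a partial translation of propagation $|j|$ on $Y$ whose compression is the original. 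This gives the reverse inclusion.
\end{enumerate}

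\textbf{The non-embedding sketch also misses two points.} First, once horizontal structure is added, the relevant girth is that of the whole bundle $Y_n$, not of the fibers. The matchings must therefore be chosen, via the Lu--Sz\'ekely local lemma, so that $\girth(Y_n)>nL_n$. A bounded-step image of $C_{L_n}$ then spans a tree, and the tree-median lemma produces far-apart points with close images. Second, you must rule out $f(C_{L_n})$ landing in an earlier, low-girth component. The paper does this with the growth condition $L_{n+1}>(n+1)\sum_{j\le n}|Y_j|$ together with the uniform bound on fiber cardinalities of a coarse embedding. Your phrase ``the girth hypothesis does the work'' addresses neither point.
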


Although Theorem~\ref{thm:main} gives a negative answer to
Problem~\ref{prob:main} in general, embedding rigidity persists under
suitable regularity assumptions on the target space. Braga, Farah, and
Vignati \cite[Theorem~1.4(i)]{BFV20} proved that if every sparse subspace of
\(Y\) yields only compact ghost projections, then an isomorphism of
\(\Cu(X)\) onto a hereditary \(C^*\)-subalgebra of \(\Cu(Y)\) implies that
\(X\) coarsely embeds into \(Y\). Under the stronger assumption that \(Y\)
has property~A, they obtained an injective coarse embedding
\cite[Theorem~1.4(ii)]{BFV20}. Our next result shows that
the injective conclusion already follows from the weaker sparse
compact-ghost hypothesis.

\begin{thm}\label{thm:compact-ghost-rigidity}
	Let \(X\) and \(Y\) be uniformly locally finite metric spaces, and let
$
	\Phi\colon \Cu(X)\longrightarrow \Cu(Y)
$
	be an injective \(*\)-homomorphism with hereditary range. Suppose that
	every sparse subspace \(Z\subseteq Y\) yields only compact ghost
	projections. Then there exists an injective coarse embedding
$
	f\colon X\longrightarrow Y.
$
\end{thm}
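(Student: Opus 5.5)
The plan is to follow the Braga--Farah--Vignati strategy to get a coarse-like relation, then use Hall's marriage theorem to extract an injective map. First, since \(\Phi\) has hereditary range, \(\Phi\) is implemented spatially. Concretely, there is an isometry \(V\colon \ell_2(X)\to\ell_2(Y)\) with \(\Phi(a)=VaV^*\), and \(VV^*=\Phi(1)\) is a projection in \(\Cu(Y)\). We look at the projections \(P_x:=\Phi(e_{xx})\), which are rank-one projections onto \(V\delta_x\). For \(\varepsilon>0\), put
\[
F_\varepsilon(x)=\{y\in Y:\ \|\chi_{\{y\}}V\delta_x\|\ge\varepsilon\}.
\]
The key input from the sparse compact-ghost hypothesis, as in \cite[Theorem~1.4(i)]{BFV20}, is uniform non-vanishing. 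We will show there exists \(\delta>0\) such that \(\sup_y\|\chi_{\{y\}}V\delta_x\|\ge\delta\) for all \(x\). The argument runs by contradiction. If not, we choose a sequence \(x_n\) with \(\sup_y|\langle V\delta_{x_n},\delta_y\rangle|\to0\), and pass to a subsequence so that the vectors \(V\delta_{x_n}\) are essentially supported on far-apart finite sets \(Y_n\). This is possible because \(V\delta_x\) is approximately finitely supported, uniformly, since \(\Phi(e_{xx})\) lies in \(\Cu(Y)\) and approximable by finite propagation uniformly in \(x\); see the \(\ell_2\)-uniform approximability arguments. Then \(\sum_n \Phi(e_{x_nx_n})\) is strongly convergent to a noncompact projection in \(\Cu(Y)\) which, compressed to the sparse subspace \(\bigsqcup Y_n\), is (after small perturbation) a noncompact ghost projection, a contradiction.

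Next, from \(\delta\), define the relation \(R=\{(x,y):\ |\langle V\delta_x,\delta_y\rangle|\ge\delta'\}\) for some \(\delta'\le\delta\). Every \(x\) has a neighbour, and the number of \(x\) related to a fixed \(y\) is at most \(\delta'^{-2}\), because \(\sum_x|\langle V\delta_x,\delta_y\rangle|^2=\|V^*\delta_y\|^2\le1\). The coarseness of any selector of \(R\) follows as in \cite{BFV20}. If \(d(x,x')\le r\), then \(e_{xx'}\) has bounded propagation, so \(\Phi(e_{xx'})=V e_{xx'}V^*\) is uniformly approximable by finite-propagation operators, forcing \(d(y,y')\) bounded for \(y\in F(x),y'\in F(x')\). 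Expansiveness is handled symmetrically: large \(d(x,x')\) forces \(\Phi(e_{xx'})\) to be a ``ghost-like'' in the far-apart direction via the same uniform-approximability argument applied to \(V^*(\cdot)V=\Phi^{-1}\)-compressions, giving bounded preimage diameters. Hence any map \(f\) with \((x,f(x))\in R\) is a coarse embedding, and any two such maps are close.

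The main new step, and the expected obstacle, is injectivity. We want a selector of \(R\) that is injective, and for this we apply Hall's marriage theorem (the infinite, locally finite version) to the bipartite graph \(R\subseteq X\times Y\). Each \(x\) has finitely many neighbours, so it suffices to show \(|N_R(A)|\ge|A|\) for finite \(A\subseteq X\). The natural inequality is a trace estimate. Set \(Q_A=\sum_{x\in A}P_x\), with \(\operatorname{tr}Q_A=|A|\). We need to compare this with the mass on \(N_R(A)\), but the mass of \(V\delta_x\) outside \(N_R(A)\) need not be small, so the threshold relation \(R\) is too thin. Instead we plan to enlarge: use \(R_r=\{(x,y): d(y,F_{\delta'}(x))\le r\}\). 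This is still a coarse relation with bounded fibres, by uniform local finiteness and the bounds above. We then need uniformity: there is \(r\) such that \(\|\chi_{B_r(F(x))}V\delta_x\|^2\ge 1-\eta\) for all \(x\). This should again follow from the compact-ghost hypothesis by the same sparse-subsequence contradiction as in the first step. Then
\[
|A|(1-\eta)\le\sum_{x\in A}\|\chi_{N_{R_r}(A)}V\delta_x\|^2\le \operatorname{tr}(\chi_{N}VV^*\chi_N)\le|N|,
\]
so \(|N|\ge(1-\eta)|A|\). Getting the exact inequality \(|N|\ge|A|\) is the delicate point. Our plan is to replace \(R_r\) by \(R_{r'}\) for a larger \(r'\) so that neighbourhoods grow by a factor. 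Failing that, we apply Hall to a \(k\)-fold version, allowing each \(y\) to be used \(\lceil 1/(1-\eta)\rceil\) times, and then spread the multiplicity to distinct points within a bounded ball. The trouble is that balls may not contain enough distinct points, so we expect the genuine argument to choose \(\eta\) small, with \(1-\eta>|A|^{-1}(|A|-1)\) failing only for large \(A\). For large \(A\) we would use the enlargement instead. This step is where the full force of the hypothesis will be needed.
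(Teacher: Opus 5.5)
\textbf{There is a genuine gap in the injectivity step.} Your first three steps follow the paper: the spatial implementation, the uniform coefficient bound, and the coarse selector (Lemmas~\ref{lem:spatial-implementation}, \ref{lem:uniform-coefficient} and Proposition~\ref{prop:coefficient-selector-coarse}). Two justifications there need fixing.
\begin{itemize}
\item The sparse localization needs only that an orthonormal sequence is weakly null and that each single vector is approximately finitely supported. Uniform localization in \(x\) is not available before the coefficient bound is known. In fact it fails for the fibrewise constant vectors of Section~\ref{sec:corner}, even though those rank-one projections are uniformly approximable.
\item The coarse estimates rest on one propagation bound for all \(\Phi(e_{xx'})\) with \(d(x,x')\le r\). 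That bound comes from a Baire-category uniformization (Lemma~\ref{lem:uniform-subseries}), not from each operator lying in \(\Cu(Y)\).
\end{itemize}

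Injectivity is the actual new content of the theorem. Your trace bound gives only \(|N_r(A)|\ge(1-\eta)|A|\). By integrality this is Hall's condition only when \(|A|<1/\eta\), and none of your remedies closes the gap:
\begin{itemize}
\item Enlarging \(r\) fails because neighbourhoods need not grow; \(N_r(A)\) may already be an isolated finite block.
\item Spreading multiplicity needs room that, as you note, may not exist.
\item Your closing suggestion relies on that same enlargement.
\end{itemize}
More fundamentally, the per-vector information you use cannot force Hall's condition. Take \(n+1\) even, \(|Y_n|=n\), and place the vertices \(a_1,\dots,a_{n+1}\) of a regular simplex in \(\ell^2(Y_n)\) so that each has a coordinate of modulus at least \(2^{-1/2}\sqrt{(n+1)/n}\). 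Let \(w\) be a distant point. The vectors \(\sqrt{n/(n+1)}\,a_j+(n+1)^{-1/2}\delta_w\) are then orthonormal. Each has a coefficient of modulus at least \(2^{-1/2}\) and mass \(n/(n+1)\) on \(Y_n\), yet \(n+1\) points cannot be matched into \(Y_n\). Their small tails add up coherently to \(\delta_w\), and only the subprojections \(\Phi(\chi_A)\) for many \(A\) at once can detect this.

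That is exactly what the paper exploits. Write \(\xi_x=V\delta_x\), let \(f\) be your coefficient selector, and set \(N_R(F)=\bigcup_{x\in F}B(f(x),R)\).
\begin{enumerate}
\item Lemma~\ref{lem:uniform-subseries}, applied to \(A\mapsto P_A=\Phi(\chi_A)\) for \(A\subseteq X\), gives a single \(R\) and operators \(T_A\) with \(\operatorname{prop}(T_A)\le R\) and \(\|P_A-T_A\|<\delta/4\) for \emph{all} \(A\).
\item If \(|N_R(F)|<|F|\), a dimension count gives a unit vector \(\zeta=\sum_{x\in F}a_x\xi_x\in\operatorname{ran}P_F\) orthogonal to \(\ell^2(N_R(F))\). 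Hence \(e\zeta=0\) for \(e=\chi_{f(F)}\).
\item Random signs break this cancellation: \(\mathbb E\|e\sum_x\varepsilon_xa_x\xi_x\|^2=\sum_x|a_x|^2\|e\xi_x\|^2\ge\delta^2\).
\item Since \(\sum_x\varepsilon_xa_x\xi_x=(2P_A-P_F)\zeta\) with \(A=\{x:\varepsilon_x=1\}\), some \(A\subseteq F\) satisfies \(\|eP_A\zeta\|\ge\delta/2\).
\item But \(\zeta\) is supported at distance greater than \(R\) from \(f(F)\), so \(eT_A\zeta=0\) and therefore \(\|eP_A\zeta\|<\delta/4\), a contradiction.
\end{enumerate}
This yields Hall's condition at the uniform radius \(R\). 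Hall's theorem plus a compactness argument then produces an injective map within distance \(R\) of \(f\).
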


Thus, the sparse compact-ghost hypothesis upgrades the weak rigidity
established in \cite{BFV20} to strong rigidity. Since every ghost projection
on a subspace of \(Y\) extends by zero to a ghost projection on \(Y\), we
obtain the following immediate consequence.

\begin{cor}\label{cor:global-ghost-rigidity}
	Under the hereditary-embedding hypotheses of
	Theorem~\ref{thm:compact-ghost-rigidity}, suppose that every ghost
	projection in \(\Cu(Y)\) is compact. Then \(X\) admits an injective coarse
	embedding into \(Y\).
\end{cor}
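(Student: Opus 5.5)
The corollary follows directly from Theorem~\ref{thm:compact-ghost-rigidity}. We only need to check that its hypothesis (every ghost projection in \(\Cu(Y)\) is compact) implies the sparse hypothesis there (every sparse subspace \(Z\subseteq Y\) yields only compact ghost projections). The plan is as follows. Fix a sparse subspace \(Z\subseteq Y\) and a ghost projection \(p\) coming from \(Z\). Depending on the paper's precise definition of "yields", such a \(p\) lies either in \(\Cu(Z)\), or in the corner \(\chi_Z\Cu(Y)\chi_Z\), or in a product of \(\Cu(Z_n)\) over the finite pieces \(Z_n\) of the sparse decomposition. We transport \(p\) to a projection \(\tilde p\in\Cu(Y)\) by extending it by zero, i.e.\ identifying \(\ell_2(Z)\subseteq \ell_2(Y)\) and setting \(\tilde p = p\oplus 0\).

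The key verifications come in order.

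\begin{enumerate}[label=(\roman*)]
\item \(\tilde p\in\Cu(Y)\). The inclusion \(Z\hookrightarrow Y\) is isometric, so finite-propagation operators on \(\ell_2(Z)\) extend by zero to finite-propagation operators on \(\ell_2(Y)\) with the same propagation. Extension by zero is an isometric \(*\)-homomorphism, so norm limits are preserved and \(\Cu(Z)\) embeds as the corner \(\chi_Z\Cu(Y)\chi_Z\). If the definition uses the block-diagonal product \(\prod_n \Cu(Z_n)\) instead, sparseness (\(d(Z_n,Z_m)\to\infty\)) still ensures that block-diagonal operators with uniformly bounded propagation in each block have finite propagation in \(Y\), and the same argument applies.
\item \(\tilde p\) is a projection, because extension by zero is a \(*\)-homomorphism.
\item \(\tilde p\) is a ghost. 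Its matrix entries are \(\langle \tilde p\delta_y,\delta_{y'}\rangle = \langle p\delta_y,\delta_{y'}\rangle\) when \(y,y'\in Z\), and \(0\) otherwise. Given \(\varepsilon>0\), the ghost property of \(p\) supplies a finite \(F\subseteq Z\) outside of which the entries of \(p\) are below \(\varepsilon\). The same finite \(F\) then works for \(\tilde p\) on \(Y\).
\end{enumerate}

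By hypothesis \(\tilde p\) is compact. Since \(p=\chi_Z\tilde p\chi_Z\) restricted to \(\ell_2(Z)\), \(p\) is compact as well. Hence every sparse subspace of \(Y\) yields only compact ghost projections, and Theorem~\ref{thm:compact-ghost-rigidity} applied to the given injective \(*\)-homomorphism \(\Phi\) with hereditary range gives an injective coarse embedding \(X\to Y\).

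The only potential obstacle is matching the exact definition of "sparse subspace yields only compact ghost projections" (from \cite{BFV20}). If it is phrased via the block-diagonal algebra \(\prod_n\Cu(Z_n)\) rather than \(\Cu(Z)\), then step~(i) needs the sparseness argument just described. Everything else is a routine check.
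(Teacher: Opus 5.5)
Your proposal is correct and is essentially the paper's proof: extending by zero from $\Cu(Z)$ to $\Cu(Y)$ preserves finite propagation, projections and the ghost property, and a projection is compact exactly when its zero extension is, so the global hypothesis gives the sparse one and Theorem~\ref{thm:compact-ghost-rigidity} applies. You do not need to hedge about the definition, since Definition~\ref{def:ghost} already fixes ``yields only compact ghost projections'' to mean ghost projections in $\Cu(Z)$; drop the block-diagonal variant, because $\prod_n \Cu(Z_n)$ is not contained in $\Cu(Y)$ when the diameters of the $Z_n$ are unbounded.
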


We now outline the main ideas of the proofs.

\vspace{0.1in}
\noindent\textbf{Proof strategy and relation to previous work.}
For Theorem~\ref{thm:main}, we take \(X\) to be a coarse disjoint union of
cycles and construct over each cycle a high-girth graph bundle with uniformly
expanding fibers and perfect-matching horizontal edges.  The existence of the
fibers and matchings combines results on random regular graphs
\cite{Fri08,MWW04} with the random-injection local lemma of Lu and
Sz\'ekely \cite{LS07}.  Fiberwise averaging defines an isometry
\[
U\colon \ell^2(X)\longrightarrow \ell^2(Y),
\]
and the uniform spectral gap gives \(q=UU^*\in\Cu(Y)\) by the standard
expander-ghost functional-calculus argument \cite{RW14}.  The
construction-specific point is the exact corner identity
\[
q\Cu(Y)q=U\Cu(X)U^*,
\]
which gives a hereditary range, unlike the more general embedding in
\cite[Proposition~2.4]{BFV20}.  The high-girth condition, combined with a
tree-median argument, rules out every coarse embedding \(X\to Y\).  Moreover,
\(q\) is a noncompact ghost, so the example realizes precisely the
obstruction excluded by Theorem~\ref{thm:compact-ghost-rigidity}.  The entire
construction takes place within bounded-geometry metric spaces.

For Theorem~\ref{thm:compact-ghost-rigidity}, the spatial implementation and
the passage from uniformly large matrix coefficients to coarse control follow
the strategy of \cite[Lemmas~5.1, 5.3, 6.1, and~6.2]{BFV20}.  The sparse
compact-ghost hypothesis first yields a uniform lower bound on the
coefficients of the implementing isometry.  A Baire-category argument,
modeled on the subseries uniformization principles in
\cite[Lemma~4.9]{BF21} and \cite[Lemma~2.3]{BFV24}, then gives a common
finite-propagation bound for all diagonal subprojections.  This produces a
coarse coefficient selector, while a Rademacher argument yields a single
radius \(R\) for which the Hall inequalities hold for every finite subset of
\(X\).  Hall's theorem therefore gives an injective selector.  Hall-type
arguments have appeared previously in uniform Roe rigidity
\cite[Lemmas~6.8 and~6.10]{BFV20},
\cite[Lemma~6.9 and Corollary~6.10]{WW20},
\cite[Section~1.1]{Vig26}, and \cite[Section~7]{Zha26}; the new point here is
that the required uniform Hall radius is obtained from the sparse
compact-ghost hypothesis, without assuming property~A.

\vspace{0.1in}
\noindent \textbf{Organization of this paper.} The paper is organized as follows. Section~\ref{sec:prelim} records the
coarse and operator-algebraic preliminaries. In Section~\ref{sec:bundles} we
construct high-girth graph bundles with expanding fibers.
Section~\ref{sec:obstruction} proves the geometric nonembedding theorem. In
Section~\ref{sec:corner} we identify the uniform Roe corner and finish the
proof of Theorem~\ref{thm:main}. Section~\ref{sec:positive} proves
Theorem~\ref{thm:compact-ghost-rigidity} and
Corollary~\ref{cor:global-ghost-rigidity}.

\section{Preliminaries}\label{sec:prelim}

We use only discrete metric spaces.  This is sufficient for the original question, which is formulated for the larger class of uniformly locally finite coarse spaces.

\begin{defin}
Let $(Z,d)$ be a metric space.  Its bounded coarse structure consists of all subsets $E\subseteq Z\times Z$ for which
\[
\sup\{d(z,w):(z,w)\in E\}<\infty.
\]
The space has \emph{bounded geometry} if, for every $r\geq 0$,
\[
\sup_{z\in Z}|B(z,r)|<\infty.
\]
For a discrete metric space, bounded geometry is exactly uniform local finiteness of its bounded coarse structure.
\end{defin}

For an operator $a\in B(\ell^2(Z))$, its propagation is
\[
\operatorname{prop}(a)=
\sup\{d(z,w):\langle a\delta_w,\delta_z\rangle\neq 0\},
\]
with the convention that the supremum may be infinite.  The uniform Roe algebra is
\[
\Cu(Z)=\overline{\{a\in B(\ell^2(Z)): \operatorname{prop}(a)<\infty\}}^{\|\cdot\|}.
\]
Every operator whose matrix has finite support belongs to $\Cu(Z)$.  Since such finite-rank operators are norm dense in $\cK(\ell^2(Z))$, it follows that
\begin{equation}\label{eq:compacts-in-roe}
\cK(\ell^2(Z))\subseteq \Cu(Z).
\end{equation}

We shall repeatedly assemble finite graphs into one metric space.  The following standard construction fixes the convention.

\begin{defin}\label{def:coarse-disjoint-union}
Let $(Z_n,d_n)$ be finite metric spaces, choose $o_n\in Z_n$, and choose real numbers
\[
0=t_1<t_2<t_3<\cdots
\]
so that $t_{n+1}-t_n\to\infty$.  On $Z=\bigsqcup_n Z_n$, retain $d_n$ inside $Z_n$ and set
\[
d(z,w)=d_n(z,o_n)+|t_n-t_m|+d_m(o_m,w)
\]
when $z\in Z_n$, $w\in Z_m$, and $n\neq m$.  This formula defines a metric.  Any such $Z$ is called a \emph{coarse disjoint union} of the $Z_n$.
\end{defin}

The condition on the gaps implies that, for every $r$, all but finitely many components are at distance greater than $r$ from every other component.  If the $Z_n$ are finite graphs of uniformly bounded degree with their path metrics, then their coarse disjoint union has bounded geometry.  Moreover, the coarse structure is independent of the particular sequence $(t_n)$ satisfying the stated gap condition.

We use the following metric formulation of coarse embeddability.

\begin{defin}
A map $f\colon X\to Y$ between metric spaces is a \emph{coarse embedding} if there are nondecreasing functions $\rho_-,\rho_+\colon[0,\infty)\to[0,\infty)$, with $\rho_-(t)\to\infty$ as $t\to\infty$, such that
\[
\rho_-(d_X(x,x'))\leq d_Y(f(x),f(x'))
\leq \rho_+(d_X(x,x'))
\]
for all $x,x'\in X$.
\end{defin}

Injectivity is not part of this definition: a coarse embedding may identify points at uniformly bounded distance.  Equivalently, $f$ is controlled and effectively proper for the bounded coarse structures.  We shall use only the following direct consequence: for each $r$ there is $s$ such that
\begin{equation}\label{eq:effective-proper}
d_Y(f(x),f(x'))\leq r\quad\Longrightarrow\quad d_X(x,x')\leq s.
\end{equation}
When $X$ has bounded geometry, applying \eqref{eq:effective-proper} with $r=0$ also shows that the fibers of $f$ have uniformly bounded cardinality.

We also recall the operator-algebraic and geometric notions which enter the positive result.

\begin{defin}\label{def:hereditary}
A $C^*$-subalgebra $B$ of a $C^*$-algebra $A$ is \emph{hereditary} if
\[
0\leq a\leq b,\qquad a\in A,\quad b\in B,
\]
implies $a\in B$.
\end{defin}

A projection $p\in A$ determines the hereditary subalgebra $pAp$.  Conversely, a hereditary subalgebra $B$ which has a unit $p$ is equal to $pAp$: indeed, $pap\leq \|a\|p$ for $a\in A_+$, so heredity gives $pA_+p\subseteq B$ and hence $pAp\subseteq B$; the reverse inclusion follows from $B=pBp\subseteq pAp$.

\begin{defin}
A metric space $Z$ is \emph{sparse} if there is a partition
\[
Z=\bigsqcup_{n\in\mathbb N}Z_n
\]
into finite nonempty sets such that
\[
d(Z_n,Z_m)\longrightarrow\infty
\quad\text{as }n+m\longrightarrow\infty,\quad n\neq m.
\]
A subset of a metric space is called sparse when it is sparse for the restricted metric.
\end{defin}

\begin{defin}\label{def:ghost}
An operator $T\in\Cu(Z)$ is a \emph{ghost} if, for every $\varepsilon>0$, there is a finite set $F\subseteq Z$ such that
\[
|\langle T\delta_z,\delta_w\rangle|<\varepsilon
\qquad(z,w\in Z\setminus F).
\]
A ghost which is a projection is called a \emph{ghost projection}.  We say that $Z$ \emph{yields only compact ghost projections} if every ghost projection in $\Cu(Z)$ is compact.
\end{defin}

\section{High-girth graph bundles}\label{sec:bundles}

The construction in Theorem~\ref{thm:main} requires finite regular graphs
with arbitrarily large girth and a spectral gap bounded uniformly away from
zero. We begin with two standard results on random regular graphs. The first
provides asymptotically optimal control of the nontrivial adjacency
eigenvalues.

\begin{lem}[\cite{Fri08}]
	\label{lem:friedman}
Fix an integer $d\geq 3$ and a number $\varepsilon>0$. For every integer
$M\geq d+1$ such that $dM$ is even, let $G_M$ be chosen uniformly at random
from the simple $d$-regular graphs on the vertex set $\{1,\ldots,M\}$. If
	\[
	d=\lambda_1(G_M)\geq \lambda_2(G_M)\geq\cdots\geq\lambda_M(G_M)
	\]
	are the eigenvalues of its adjacency matrix, then
	\[
	\mathbb P\left(
	\max_{2\leq i\leq M}|\lambda_i(G_M)|
	\leq 2\sqrt{d-1}+\varepsilon
	\right)
	\longrightarrow 1
	\]
	as $M\to\infty$ through values for which $dM$ is even.
\end{lem}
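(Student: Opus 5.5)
This is Friedman's theorem, and in the paper it is used as a black box imported from \cite{Fri08}. A self-contained argument would be far longer than the paper, so the plan below is the route I would follow if a proof were required. I would take it from Bordenave's later simplification of Friedman's argument rather than from Friedman's original method.

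\textbf{Reduction to the configuration model.} First I would move from the uniform simple $d$-regular graph to the configuration (pairing) model $\widetilde G_M$. In that model the $dM$ half-edges are matched uniformly at random, and $\widetilde G_M$ is a multigraph. Two facts make this legitimate. Conditioned on being simple, $\widetilde G_M$ is uniform on simple $d$-regular graphs. The probability that it is simple converges to $e^{-(d^2-1)/4}>0$. Hence any event whose probability tends to $1$ for $\widetilde G_M$ also has probability tending to $1$ for $G_M$. It therefore suffices to prove the eigenvalue bound for $\widetilde G_M$.

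\textbf{Passage to the nonbacktracking operator.} Next I would replace the adjacency matrix by the nonbacktracking operator $B$. This operator acts on the $dM$ directed edges, and its trivial eigenvalue is $d-1$. The Ihara--Bass identity relates the two spectra: every nontrivial adjacency eigenvalue $\lambda$ satisfies $\lambda=\mu+(d-1)/\mu$ for some eigenvalue $\mu$ of $B$. If all nontrivial $\mu$ satisfy $|\mu|\le\sqrt{d-1}+\delta$, then all nontrivial $\lambda$ satisfy $|\lambda|\le 2\sqrt{d-1}+\varepsilon$ once $\delta$ is chosen small in terms of $\varepsilon$. The target thus becomes
\[
\mathbb P\bigl(\text{all nontrivial eigenvalues } \mu \text{ of } B \text{ satisfy } |\mu|\le \sqrt{d-1}+\delta\bigr)\to 1 .
\]

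\textbf{Trace method.} I would fix $\ell\approx c\log M$ with small $c>0$. Two preliminary steps come first.
\begin{enumerate}[label=(\roman*)]
\item Show that with high probability $\widetilde G_M$ is $\ell$-tangle-free: every ball of radius $\ell$ contains at most one cycle. This is a first-moment count of small dense subgraphs.
\item Remove the Perron direction from $B^\ell$, using the projection onto the constant vector and its image under the edge-reversal involution. Call the resulting operator $\underline{B}^{(\ell)}$.
\end{enumerate}
The main estimate is then
\[
\mathbb E\,\|\underline{B}^{(\ell)}\|^{2m}\le M^{O(1)}(d-1)^{m\ell}(\log M)^{O(m)}
\]
for $m\approx\log M/\log\log M$. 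I would prove it by expanding $\operatorname{tr}\bigl(\underline{B}^{(\ell)}(\underline{B}^{(\ell)})^*\bigr)^m$ as a sum over concatenations of $2m$ tangle-free nonbacktracking paths. Each path class is weighted by its probability under the pairing model, with the centering producing cancellations. The classes are then counted by their excess (number of edges minus vertices plus one). Markov's inequality turns this moment bound into $\|\underline{B}^{(\ell)}\|\le(\sqrt{d-1}+\delta)^\ell$ with high probability. A perturbation argument (Bauer--Fike type) then bounds the nontrivial spectral radius of $B$.

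\textbf{Main obstacle.} I expect the hardest step to be the combinatorial path count in the trace estimate. It must keep exactly $(d-1)^{\ell/2}$ growth per step, with no extra exponential factor. This is precisely where tangled configurations would destroy sharpness: a bounded number of extra cycles near a vertex inflates the count. The remedy is to truncate to tangle-free paths. The truncation error is then controlled through the decomposition $B^\ell=B^{(\ell)}+\sum_k B^{(k-1)}R_kB^{(\ell-k)}$, where the terms $R_k$ involve tangled walks. Each such term is bounded by the same moment method with one additional cycle accounted for.
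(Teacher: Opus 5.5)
You take a genuinely different route from the paper. The paper gives no argument for this lemma: it imports the statement from \cite[Corollary~1.4]{Fri08} and the discussion following it, applied to the uniform simple model. You instead outline Bordenave's nonbacktracking proof, and its architecture is sound. The transfer from the pairing model is correct. Every simple graph arises from exactly $(d!)^M$ pairings, so conditioning on simplicity gives the uniform model, and $\mathbb P(G_M\text{ bad})\leq\mathbb P(\widetilde G_M\text{ bad})/\mathbb P(\widetilde G_M\text{ simple})$. The Ihara--Bass step is also correct once eigenvalues are counted with multiplicity. A second eigenvalue $d$ of $A$ would produce a second copy of $\mu=d-1$, and $\lambda=-d$ would produce $\mu=-(d-1)$, so both are excluded. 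The bound then follows because $t\mapsto t+(d-1)/t$ is increasing on $[\sqrt{d-1},\infty)$. Your route explains where $2\sqrt{d-1}$ comes from and makes explicit the passage to simple graphs that the paper compresses into one phrase. The citation buys brevity, and it is more than the paper needs: Lemma~\ref{lem:expanders-high-girth} only uses $\lambda_2(G_M)\leq d(1-\gamma)$ with probability tending to $1$. That weaker fact already follows from edge expansion of random regular graphs plus Cheeger's inequality.

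Two points in your sketch should be corrected. First, $\underline B^{(\ell)}$ is obtained by centering the random matching entries inside the sum over tangle-free paths, not by projecting $B^\ell$ off the Perron direction. On the $\ell$-tangle-free event one has $B^\ell=B^{(\ell)}$ exactly, so the remainders $R_k$ are not corrections to $B^\ell$. They arise when $B^{(\ell)}$ is expanded around $\underline B^{(\ell)}$, where concatenations of two tangle-free paths whose union is tangled must be subtracted. Second, no Bauer--Fike step is needed, and it would be awkward anyway since $B$ is not normal. The constant vector $\chi$ satisfies $B\chi=B^{*}\chi=(d-1)\chi$, so $\chi^{\perp}$ is $B$-invariant and the nontrivial spectrum of $B$ is the spectrum of $B|_{\chi^{\perp}}$. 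Its spectral radius is at most $\sup\{\|B^{\ell}x\|^{1/\ell}:x\perp\chi,\ \|x\|=1\}$, and this is exactly the quantity the decomposition is used to bound.
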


This is
\cite[Corollary~1.4 and the discussion immediately following it,
pp.~4--5]{Fri08}, applied to the uniform model of simple $d$-regular
graphs.

The second result shows that, for fixed $d$ and $g$, the absence of cycles of
length at most $g$ has a strictly positive limiting probability.

\begin{lem}[McKay--Wormald--Wysocka,
	{\cite[p.~3, Corollary~1]{MWW04}}]
	\label{lem:random-regular-high-girth}
	Fix integers $d\geq 3$ and $g\geq 3$. For every integer $M\geq d+1$
	such that $dM$ is even, let $G_M$ be chosen uniformly at random from
	the simple $d$-regular graphs on $M$ vertices. Then
	\[
	\mathbb P\bigl(\girth(G_M)>g\bigr)
	\longrightarrow
	\exp\left(
	-\sum_{r=3}^{g}\frac{(d-1)^r}{2r}
	\right)
	\]
	as $M\to\infty$ through values for which $dM$ is even. In particular,
	the limiting probability is strictly positive. The growth assumption
	\[
	(d-1)^{2g-1}=o(M)
	\]
	in the cited corollary is automatic here because $d$ and $g$ are fixed.
\end{lem}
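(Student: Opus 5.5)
The plan is to deduce the statement directly from \cite[Corollary~1]{MWW04}, so the work is only to match hypotheses and read off the limit. That corollary concerns a uniformly random simple $d$-regular graph on $M$ vertices, with $d$ and $g$ allowed to depend on $M$ under a growth restriction of the form $(d-1)^{2g-1}=o(M)$. It asserts that the probability of having no cycle of length at most $g$ is
\[
\exp\Bigl(-\sum_{r=3}^{g}\tfrac{(d-1)^r}{2r}+o(1)\Bigr).
\]
The first step is to observe that when $d\geq 3$ and $g\geq 3$ are fixed, the quantity $(d-1)^{2g-1}$ is a constant. A constant is trivially $o(M)$ as $M\to\infty$ along the admissible values with $dM$ even, so the corollary applies verbatim.

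The second step is to note that the event $\{\girth(G_M)>g\}$ is exactly the event that $G_M$ has no cycle of length $3,\dots,g$. Cycles of length $1$ and $2$ are absent automatically because $G_M$ is simple. Therefore the probability in the statement equals the one in the corollary and converges to the stated exponential. Positivity is immediate: the exponent is a finite sum of finite real numbers, so its exponential is strictly positive.

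As a sanity check, and as a fallback if one preferred not to rely on the sharp form with growing parameters, I would recall the classical fixed-parameter argument of Bollob\'as and Wormald via the configuration model. In that model the numbers $C_1,\dots,C_g$ of cycles of each length converge jointly to independent Poisson variables with means $(d-1)^r/(2r)$. Conditioning on simplicity, that is $C_1=C_2=0$, is an event of asymptotically positive probability, and it yields the uniform simple model. It follows that $\mathbb P(C_3=\dots=C_g=0)$ converges to the same product of Poisson zero-probabilities.

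The only real obstacle is bookkeeping. One must make sure the cited corollary is stated for the uniform simple model rather than the pairing model, and that its error term is uniform as $M$ ranges over values with $dM$ even. Both points are handled in \cite{MWW04}, and the fixed-parameter regime is the easiest case of their result.
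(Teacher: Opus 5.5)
Your proposal is correct and takes essentially the same approach as the paper, which likewise obtains the lemma by invoking \cite[Corollary~1]{MWW04} directly. As you do, it notes that for fixed $d,g$ the quantity $(d-1)^{2g-1}$ is constant, hence $o(M)$, so the $o(1)$ in the exponent disappears in the limit. Your configuration-model fallback (joint Poisson limits of cycle counts, then conditioning on $C_1=C_2=0$) is a valid independent check but is not needed.
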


Combining the preceding two results yields the family of expanding
high-girth graphs needed for the fibers of our construction.

\begin{lem}\label{lem:expanders-high-girth}
	There exist an integer $d\geq 3$ and a number $\gamma>0$ with the following
	property. For every pair of integers $g,N\geq 1$, there is a finite connected
	$d$-regular graph $H$ with
	\[
	|H|\geq N,\qquad \girth(H)>g,
	\]
	such that the normalized graph Laplacian
	$\Delta_H=I-d^{-1}A_H$ satisfies
	\[
	\sigma(\Delta_H)\subseteq \{0\}\cup[\gamma,2].
	\]
	The eigenvalue $0$ is simple.
\end{lem}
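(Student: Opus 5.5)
Fix $d=3$ (any fixed $d\geq 3$ would do) and choose $\varepsilon>0$ so small that $2\sqrt{d-1}+\varepsilon<d$. For $d=3$ we have $2\sqrt2\approx 2.83<3$, so $\varepsilon=0.1$ works. Set
\[
\gamma=1-\frac{2\sqrt{d-1}+\varepsilon}{d}>0 .
\]
Given $g,N$, enlarge $g$ to be at least $3$. Then apply Lemma~\ref{lem:friedman} and Lemma~\ref{lem:random-regular-high-girth} to the same uniform random $d$-regular graph $G_M$ on $M$ vertices, with $M\to\infty$ through values for which $dM$ is even; for $d=3$ this means $M$ even.

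The key step is a union bound. By Lemma~\ref{lem:random-regular-high-girth},
\[
\mathbb P(\girth(G_M)>g)\to c>0 .
\]
By Lemma~\ref{lem:friedman}, the probability of the event $E_M$ that $\max_{i\geq2}|\lambda_i(G_M)|\leq 2\sqrt{d-1}+\varepsilon$ tends to $1$. Hence
\[
\mathbb P\bigl(\girth(G_M)>g \text{ and } E_M\bigr)\geq \mathbb P(\girth(G_M)>g)-\mathbb P(E_M^c)\to c>0 .
\]
So for all sufficiently large admissible $M\geq N$ this probability is positive, and some $d$-regular graph $H$ on $M\geq N$ vertices has both properties.

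It remains to read off the spectral statement. The normalized Laplacian $\Delta_H=I-d^{-1}A_H$ has eigenvalues $1-\lambda_i/d$. For $i=1$ this gives $0$. For $i\geq 2$ we have $|\lambda_i|\leq 2\sqrt{d-1}+\varepsilon<d$, so
\[
1-\lambda_i/d\in[\gamma,\,1+(2\sqrt{d-1}+\varepsilon)/d]\subseteq[\gamma,2].
\]
In particular, $\lambda_2<d=\lambda_1$. The multiplicity of the eigenvalue $d$ of $A_H$ equals the number of connected components of $H$, so $H$ is connected and the eigenvalue $0$ of $\Delta_H$ is simple.

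The only potential obstacle is that the two cited results must refer to the same probability model, namely the uniform model on simple $d$-regular graphs, and along the same subsequence of $M$. Both lemmas are stated exactly in that form, so the union bound applies directly. Also, $\gamma$ and $d$ depend only on the fixed choice of $\varepsilon$, not on $g$ or $N$, which gives the required uniformity.
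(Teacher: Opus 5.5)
Your proposal is correct and follows the paper's proof essentially step for step. You use the same choice of $\varepsilon$ and $\gamma=1-(2\sqrt{d-1}+\varepsilon)/d$, the same estimate $\mathbb P(\text{girth and spectral events})\geq \mathbb P(\girth(G_M)>g)-\mathbb P(E_M^c)$ combining Lemmas~\ref{lem:friedman} and~\ref{lem:random-regular-high-girth}, and the same deduction of connectedness and simplicity of $0$ from the multiplicity of the adjacency eigenvalue $d$ (fixing $d=3$ instead of an arbitrary $d\geq 3$ is harmless).
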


\begin{proof}
	Fix $d\geq 3$ and choose $\varepsilon>0$ so that
	\[
	2\sqrt{d-1}+\varepsilon<d.
	\]
	Put
$
g_0=\max\{g,3\}.
$
Let $\mathcal S_M$ be the event that
\[
\max_{2\leq i\leq M}|\lambda_i(G_M)|
\leq 2\sqrt{d-1}+\varepsilon,
\]
and let $\mathcal G_M$ be the event that
\[
\girth(G_M)>g_0.
\]
By Lemma~\ref{lem:friedman},
\[
\mathbb P(\mathcal S_M^c)\longrightarrow0,
\]
whereas Lemma~\ref{lem:random-regular-high-girth} gives
\[
\mathbb P(\mathcal G_M)\longrightarrow
p_{d,g_0}:=
\exp\left(
-\sum_{r=3}^{g_0}\frac{(d-1)^r}{2r}
\right)>0.
\]
Therefore
\[
\mathbb P(\mathcal S_M\cap\mathcal G_M)
\geq
\mathbb P(\mathcal G_M)-\mathbb P(\mathcal S_M^c)
\longrightarrow p_{d,g_0}>0.
\]
Hence, for all sufficiently large admissible $M$, there exists a simple
$d$-regular graph satisfying both conditions. In particular, we may choose
such a graph $H$ with
\[
|H|=M\geq N
\qquad\text{and}\qquad
\girth(H)>g_0\geq g.
\]
	
	The spectral estimate implies that $H$ is connected, since otherwise the
	adjacency eigenvalue $d$ would have multiplicity greater than one. Set
	\[
	\gamma=
	1-\frac{2\sqrt{d-1}+\varepsilon}{d}>0.
	\]
	For every nonconstant adjacency eigenvector with eigenvalue $\lambda$, the
	corresponding eigenvalue of the normalized Laplacian is
	\[
	1-\frac{\lambda}{d}\geq\gamma.
	\]
	Since the spectrum of every normalized graph Laplacian is contained in
	$[0,2]$, we obtain
	\[
	\sigma(\Delta_H)\subseteq\{0\}\cup[\gamma,2].
	\]
	Connectedness also implies that the eigenvalue $0$ is simple.
\end{proof}

Lemma~\ref{lem:expanders-high-girth} supplies the expanding fibers. To form
the desired graph bundles, we must connect neighboring fibers by perfect
matchings without introducing short cycles. We shall choose these matchings
randomly and exclude all short-cycle configurations simultaneously.

The argument uses two results of Lu and Sz\'ekely. Their random-injection
theorem \cite[p.~4, Theorem~1]{LS07} provides an appropriate negative
dependency graph for events determined by partial matchings. We shall then
apply the following negative-dependency form of the Lov\'asz local lemma.

\begin{lem}[{\cite[p.~2, Lemma~3]{LS07}}]
	\label{lem:negative-dependency-lll}
	Let $\mathcal A_1,\ldots,\mathcal A_m$ be events in a probability space, and
	let $G$ be a graph on $\{1,\ldots,m\}$. Suppose that $G$ is a negative
	dependency graph for these events; that is, for every $i$ and every set
	\[
	J\subseteq
	\{1,\ldots,m\}\setminus\bigl(\{i\}\cup\Gamma_G(i)\bigr),
	\]
	one has
	\[
	\mathbb P\left(
	\mathcal A_i
	\,\middle|\,
	\bigcap_{j\in J}\mathcal A_j^c
	\right)
	\leq
	\mathbb P(\mathcal A_i)
	\]
	whenever the conditional probability is well defined. If there exist numbers
	$x_1,\ldots,x_m\in[0,1)$ such that
	\[
	\mathbb P(\mathcal A_i)
	\leq
	x_i\prod_{j\in\Gamma_G(i)}(1-x_j)
	\qquad (1\leq i\leq m),
	\]
	then
	\[
	\mathbb P\left(\bigcap_{i=1}^m\mathcal A_i^c\right)
	\geq
	\prod_{i=1}^m(1-x_i)>0.
	\]
\end{lem}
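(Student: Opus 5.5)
This is the negative-dependency Lovász local lemma, quoted from Lu--Sz\'ekely. I would prove it by the standard Erd\H{o}s--Lov\'asz induction, using the negative dependency hypothesis exactly where the classical proof uses mutual independence.

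\textbf{Main claim.} For every $i$ and every $S\subseteq\{1,\ldots,m\}\setminus\{i\}$ with $\mathbb P(\bigcap_{j\in S}\mathcal A_j^c)>0$, one has
\[
\mathbb P\Bigl(\mathcal A_i\,\Big|\,\bigcap_{j\in S}\mathcal A_j^c\Bigr)\leq x_i .
\]
The proof is by induction on $|S|$.

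\textbf{Base case.} If $S=\emptyset$, the hypothesis gives $\mathbb P(\mathcal A_i)\le x_i\prod_{j\in\Gamma_G(i)}(1-x_j)\le x_i$.

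\textbf{Inductive step.} Split $S=S_1\sqcup S_2$, where $S_1=S\cap\Gamma_G(i)$ and $S_2=S\setminus\Gamma_G(i)$. Write $C_k=\bigcap_{j\in S_k}\mathcal A_j^c$.

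If $S_1=\emptyset$, the definition of a negative dependency graph applies with $J=S_2$ and gives the bound $\mathbb P(\mathcal A_i)\le x_i$ directly.

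Otherwise, write the conditional probability as a quotient:
\[
\mathbb P(\mathcal A_i\mid C_1\cap C_2)=\frac{\mathbb P(\mathcal A_i\cap C_1\mid C_2)}{\mathbb P(C_1\mid C_2)}.
\]
For the numerator, drop $C_1$ and use negative dependency with $J=S_2$:
\[
\mathbb P(\mathcal A_i\cap C_1\mid C_2)\le \mathbb P(\mathcal A_i\mid C_2)\le\mathbb P(\mathcal A_i)\le x_i\prod_{j\in\Gamma_G(i)}(1-x_j).
\]
Note that $C_2\supseteq C_1\cap C_2$ has positive probability, so this conditioning is well defined.

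For the denominator, list $S_1=\{j_1,\ldots,j_r\}$ and expand by the chain rule:
\[
\mathbb P(C_1\mid C_2)=\prod_{k=1}^r\Bigl(1-\mathbb P\bigl(\mathcal A_{j_k}\,\big|\,\mathcal A_{j_1}^c\cap\cdots\cap \mathcal A_{j_{k-1}}^c\cap C_2\bigr)\Bigr).
\]
Each conditioning set is a subset of $S\setminus\{j_k\}$ with size strictly less than $|S|$, and each conditioning event has positive probability because it contains $C_1\cap C_2$. The induction hypothesis therefore bounds each factor below by $1-x_{j_k}$. This gives
\[
\mathbb P(C_1\mid C_2)\ge\prod_{j\in S_1}(1-x_j)\ge\prod_{j\in\Gamma_G(i)}(1-x_j)
\]
(the last step drops the remaining factors, which lie in $(0,1]$). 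Dividing yields the bound $x_i$. The $x_j<1$ ensures these products are positive.

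\textbf{Conclusion.} Prove inductively on $k$ that
\[
\mathbb P\Bigl(\bigcap_{i\le k}\mathcal A_i^c\Bigr)\ge\prod_{i\le k}(1-x_i)>0 .
\]
Positivity at stage $k-1$ makes the claim applicable at stage $k$, and
\[
\mathbb P\Bigl(\bigcap_{i\le k}\mathcal A_i^c\Bigr)=\Bigl(1-\mathbb P\bigl(\mathcal A_k\,\big|\,\textstyle\bigcap_{i<k}\mathcal A_i^c\bigr)\Bigr)\,\mathbb P\Bigl(\bigcap_{i<k}\mathcal A_i^c\Bigr)\ge(1-x_k)\prod_{i<k}(1-x_i).
\]

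\textbf{Main obstacle.} The delicate point is bookkeeping of well-definedness. Every conditional probability that appears must be conditioned on an event containing $\bigcap_{j\in S}\mathcal A_j^c$, so positivity propagates. The negative-dependency hypothesis must be invoked only with $J$ disjoint from $\{i\}\cup\Gamma_G(i)$, which is exactly what the split $S_1\sqcup S_2$ arranges.
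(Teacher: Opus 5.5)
Your proof is correct: it is the standard Erd\H{o}s--Lov\'asz induction, with the negative-dependency inequality replacing mutual independence. You invoke that inequality only for $J=S\setminus\Gamma_G(i)$ and only on events containing $\bigcap_{j\in S}\mathcal A_j^c$, which is exactly where it is needed. The paper does not prove this lemma itself but quotes it from Lu--Sz\'ekely, where it rests on this same standard argument, so your proposal supplies the omitted proof by the same route.
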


We now apply these probabilistic tools to show that a perfect matching can be
added between two prescribed vertex sets without destroying the girth
condition.

\begin{lem}\label{lem:matching-completion}
	For every pair of integers $\Delta,g\geq 1$, there is
	$N_0=N_0(\Delta,g)$ with the following property. Let $G$ be a finite simple
	graph with maximum degree at most $\Delta$ and girth greater than $g$.
	Suppose that $A,B\subseteq V(G)$ are disjoint,
	$|A|=|B|=N\geq N_0$, and there is no edge of $G$ joining $A$ to $B$.
	Then there is a bijection $\sigma\colon A\to B$ such that the graph
	\[
	G_\sigma
	=
	G\cup\{\{a,\sigma(a)\}:a\in A\}
	\]
	has girth greater than $g$.
\end{lem}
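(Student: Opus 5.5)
Choose $\sigma$ uniformly at random among bijections $A\to B$, identified with a uniformly random injection of $A$ into $B$. We will show that with positive probability $G_\sigma$ has no cycle of length at most $g$.

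\emph{Bad events.} Since $G$ has girth greater than $g$, every cycle of length $\le g$ in $G_\sigma$ uses at least one new edge $\{a,\sigma(a)\}$. Such a cycle decomposes into $k\ge1$ new edges $\{a_i,b_i\}$, $b_i=\sigma(a_i)$, joined by $k$ paths in $G$ (possibly of length $0$) of total length at most $g-k$, where the $a_i$ are distinct and the $b_i$ are distinct. Call such a pattern a \emph{configuration} $C=\{(a_1,b_1),\dots,(a_k,b_k)\}$, i.e.\ a partial matching of size $k\le g$ that closes a short cycle together with $G$. Let $\mathcal A_C$ be the event $\sigma(a_i)=b_i$ for all $i$. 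Then $G_\sigma$ has girth greater than $g$ iff no $\mathcal A_C$ occurs. We have
\[
\mathbb P(\mathcal A_C)=\frac{1}{N(N-1)\cdots(N-k+1)}\le \frac{c_g}{N^k}.
\]

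\emph{Counting.} Paths of length $\le g$ from a fixed vertex number at most $\Delta^{g+1}$. Fix $(a_1,b_1)$. The configuration is then determined by the sequence of connecting paths together with the pairs they lead to, and each subsequent pair after its path is forced up to the choice of $b_i$ given $a_i$ (or $a_{i+1}$ given $b_i$). Following the cycle, the new edges alternate with $G$-paths: from $b_1$ a $G$-path reaches $a_2$ (at most $\Delta^{g+1}$ choices), then $b_2$ is a free choice among $N$, and so on, with the last path forced to return to $a_1$. Hence the number of $k$-configurations containing a given pair $(a,b)$ is at most $K_g N^{k-1}$ with $K_g$ depending only on $\Delta,g$. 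More to the point, the number containing a given vertex of $A$ (with any partner) or a given vertex of $B$ is at most $K_gN^{k}\cdot N^{-1}\cdot N = K_g N^{k-1}\cdot N$. I expect this count to be the main obstacle: the crude bound is not small enough by itself, and one must use that the last path closes up, which costs a factor $N$ and gives $O(N^{k-1})$ configurations of size $k$ through a fixed vertex.

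\emph{Dependency and LLL.} By the Lu--Székely random-injection theorem \cite[Theorem~1]{LS07}, the graph joining $C$ and $C'$ whenever they \emph{conflict} (share an $a$ or a $b$ with different partners, or are otherwise non-disjoint) is a negative dependency graph for the events $\mathcal A_C$. For a configuration $C$ of size $k$, its neighbors of size $j$ number at most $2k\cdot K'_gN^{j-1}$ by the vertex count above. Set $x_C=\lambda N^{-|C|}$ for a constant $\lambda=\lambda(\Delta,g)$. Then
\[
\prod_{C'\sim C}(1-x_{C'})\ge \exp\Bigl(-2\sum_{j\le g}2gK'_gN^{j-1}\lambda N^{-j}\Bigr)=\exp(-O(\lambda/N)),
\]
which tends to $1$. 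So $\mathbb P(\mathcal A_C)\le c_gN^{-k}\le x_C\prod(1-x_{C'})$ once $\lambda=2c_g$ and $N\ge N_0$. Lemma~\ref{lem:negative-dependency-lll} then gives positive probability that no bad event occurs, which yields the required $\sigma$.

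A subtlety to check is configurations whose defining pairs are internally inconsistent. These have probability zero and can be discarded. Also, the condition that there are no $A$--$B$ edges in $G$ guarantees that $G_\sigma$ is simple.
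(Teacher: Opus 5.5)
Your proposal is correct and follows the paper's proof essentially step for step: a uniformly random bijection, bad events indexed by the partial matchings cut out by short cycles in $G\cup K_{A,B}$, the Lu--Sz\'ekely conflict graph enlarged to ``share a vertex of $A$ or $B$'', and the negative-dependency local lemma with weights of order $N^{-|S|}$ (the paper uses $(4/N)^{|S|}$). The counting step you flagged as the main obstacle is exactly the paper's bound $C_0N^{l-1}$, proved as you suggest by exposing the closing $G$-subpath backwards from the fixed vertex. When you write it out, let each $G$-path end at either endpoint of the next new edge, not always at its $A$-endpoint; this does not change the count.
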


\begin{proof}
Choose $\sigma$ uniformly from the set of bijections from $A$ onto $B$, and let $K_{A,B}$ denote the complete bipartite graph of all potential matching edges.  Let $\mathcal C$ be the family of simple cycles $C$ in $G\cup K_{A,B}$ such that $|C|\leq g$, the cycle $C$ uses at least one edge of $K_{A,B}$, and
\[
S(C)=E(C)\cap E(K_{A,B})
\]
is a partial matching between $A$ and $B$.  For every distinct partial matching $S$ arising in this way, let
\[
\mathcal A_S=\{S\subseteq\operatorname{graph}(\sigma)\}.
\]
Different cycles may determine the same partial matching; the bad events are indexed by the resulting distinct prescriptions $S$.
If $|S|=k$, then
\begin{equation}\label{eq:bad-event-probability}
\mathbb P(\mathcal A_S)=\frac{1}{(N)_k},
\qquad (N)_k=N(N-1)\cdots(N-k+1).
\end{equation}
Conversely, every cycle of length at most $g$ in $G_\sigma$ gives a partial matching $S(C)$ of this kind, because the added edges form a matching.  It is therefore enough to avoid all the events $\mathcal A_S$.

We require one elementary counting estimate. There is a constant
$C_0=C_0(\Delta,g)$ such that, for every $1\leq l\leq g$ and every
$v\in A\cup B$, the number of events $\mathcal A_S$ for which $|S|=l$ and
$v$ is incident to an edge of $S$ is at most
\begin{equation}\label{eq:event-count}
	C_0N^{l-1}.
\end{equation}
To prove this, we overcount the events by the cycles that produce them. Orient
such a cycle and mark an occurrence of $v$ immediately before one of its
potential matching edges. There are at most $2g$ choices for the orientation
and the marked occurrence. If the cycle uses $l$ potential matching edges,
there are at most $(g+1)^g$ possible tuples of lengths of the intervening
$G$-subpaths. Starting at the marked vertex and following the chosen
orientation, choose the opposite endpoint of each of the first $l-1$
potential matching edges; this gives at most $N^{l-1}$ choices. Once these
endpoints and the subpath lengths are fixed, all intervening $G$-subpaths have
at most $\Delta^g$ possible realizations in total: for fixed subpath lengths
$h_1,\ldots,h_l$, the product of the numbers of choices is at most
$\Delta^{h_1+\cdots+h_l}\leq\Delta^g$. The final $G$-subpath is
exposed backwards from the marked vertex; its other endpoint, together with
the already exposed initial endpoint of the last potential matching edge,
determines that edge. This also covers the case $l=1$, when there are no free
matching-endpoint choices. Discarding configurations that fail to form a
simple cycle or a partial matching only decreases the count. Thus one may
take
\[
C_0=2g(g+1)^g\Delta^g.
\]
This deliberately coarse bound is uniform in $G$, $A$, $B$, and $N$.

Put an edge between two events whenever their prescribed partial matchings
use a common element of $A$ or a common element of $B$. The canonical
conflict graph of Lu and Sz\'ekely joins two events when the union of their
prescriptions is not a partial injection, and it is a negative dependency
graph by their random-injection theorem. The graph just defined is a supergraph of that
conflict graph. A supergraph of a negative dependency graph is again a
negative dependency graph: enlarging the edge set only decreases the
collections of events for which the defining conditional-probability
inequality must be verified. Thus the graph defined above is a negative dependency graph for the events
$\mathcal A_S$, and Lemma~\ref{lem:negative-dependency-lll} applies.

We now verify the hypothesis of
Lemma~\ref{lem:negative-dependency-lll}. Assume $N\geq 2g$ and, for
$|S|=k$, put
\[
x_S=\left(\frac{4}{N}\right)^k.
\]
By \eqref{eq:event-count}, the sum of the $x$-values over all neighbors of such an event is at most
\[
2k\sum_{l=1}^{g} C_0N^{l-1}\left(\frac{4}{N}\right)^l
\leq \frac{K_0}{N},
\qquad
K_0=2gC_0\sum_{l=1}^{g}4^l.
\]
Choose
\[
N_0\geq\max\{2g,8,2K_0\}.
\]
Then $x_S\leq 1/2$ and
\[
\prod_{\mathcal A_T\sim\mathcal A_S}(1-x_T)
\geq 1-\sum_{\mathcal A_T\sim\mathcal A_S}x_T
\geq \frac12.
\]
Moreover, $k\leq g$, so \eqref{eq:bad-event-probability} and $N\geq 2g$ give
\[
\mathbb P(\mathcal A_S)
\leq \left(\frac{2}{N}\right)^k
\leq \frac12\left(\frac{4}{N}\right)^k
\leq x_S\prod_{\mathcal A_T\sim\mathcal A_S}(1-x_T).
\]
Lemma~\ref{lem:negative-dependency-lll} now gives
\[
\mathbb P\left(\bigcap_S\mathcal A_S^c\right)>0.
\]
Hence there exists a bijection for which none of the bad events occurs, and
any such bijection has the required property.
\end{proof}

We next combine the two preceding lemmas.  If $C_L$ denotes the cycle with vertex set $\mathbb Z/L\mathbb Z$, a graph bundle over $C_L$ will mean a graph $Y$ together with a surjection $\pi\colon V(Y)\to V(C_L)$ such that every edge of $Y$ either lies in a fiber or projects to an edge of $C_L$.

\begin{prop}\label{prop:bundle}
There are constants $d\geq 3$ and $\gamma>0$ such that, for all integers $L\geq 3$, $g\geq 1$, and $M_0\geq 1$, there exist a finite connected graph $Y$, a surjection
\[
\pi\colon V(Y)\longrightarrow V(C_L),
\]
and an integer $M\geq M_0$ satisfying the following properties.
\begin{enumerate}[label=\textup{(\roman*)}]
\item Every fiber $F_x=\pi^{-1}(x)$ has cardinality $M$ and induces a connected $d$-regular graph $H$.
\item The normalized Laplacian of $H$ has spectrum contained in $\{0\}\cup[\gamma,2]$.
\item Above each edge $\{x,x+1\}$ of $C_L$, the horizontal edges form a perfect matching between $F_x$ and $F_{x+1}$.
\item The graph $Y$ is $(d+2)$-regular and $\girth(Y)>g$.
\item The map $\pi$ is $1$-Lipschitz for the graph metrics.
\end{enumerate}
\end{prop}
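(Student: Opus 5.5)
Take $d$ and $\gamma$ from Lemma~\ref{lem:expanders-high-girth}; these are the constants in the statement. Given $L\geq3$, $g\geq1$, and $M_0$, we apply Lemma~\ref{lem:expanders-high-girth} with girth parameter $g$ and size bound $N=\max\{M_0,N_0(d+2,g)\}$, where $N_0$ comes from Lemma~\ref{lem:matching-completion}. This yields a connected $d$-regular graph $H$ with $M=|H|\geq N$, $\girth(H)>g$, and the required spectral gap. On $V(C_L)\times V(H)$ we start from the disjoint union $G_0$ of $L$ copies $F_x=\{x\}\times V(H)$ of $H$ and set $\pi(x,h)=x$. Then $G_0$ has maximum degree $d\leq d+2$ and girth greater than $g$, since every cycle lies in a single copy of $H$. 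Properties (i) and (ii) already hold.

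Next we add the horizontal matchings one at a time, for the edges $\{0,1\},\{1,2\},\dots,\{L-1,0\}$ of $C_L$. At step $j$ the current graph $G_j$ consists of $G_0$ together with perfect matchings above the edges $\{i,i+1\}$ for $i<j$. Every vertex then has degree at most $d+2$, because each fiber is incident to at most two edges of $C_L$. By induction, $\girth(G_j)>g$. Since $L\geq3$, the two fibers $A=F_j$ and $B=F_{j+1}$ are distinct, and no edge of $G_j$ joins them: the edges added so far lie above other edges of $C_L$, and $C_L$ has no multiple edges. Also $|A|=|B|=M\geq N_0(d+2,g)$. Hence Lemma~\ref{lem:matching-completion} gives a bijection $\sigma\colon A\to B$, and $G_{j+1}=(G_j)_\sigma$ still has girth greater than $g$. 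After $L$ steps we set $Y=G_L$.

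It remains to verify the listed properties. For (iii), the horizontal edges above $\{x,x+1\}$ are exactly the matching added at that step. For (iv), each vertex has $d$ fiber neighbours and exactly one matching neighbour in each of the two adjacent fibers, so $Y$ is $(d+2)$-regular; the girth bound comes from the induction. For (v), every edge of $Y$ is either vertical or projects to an edge of $C_L$, so $\pi$ maps adjacent vertices to vertices at distance at most $1$, and $\pi$ is therefore $1$-Lipschitz for the path metrics. $Y$ is connected because each fiber is connected and consecutive fibers are joined by matchings. Surjectivity of $\pi$ is clear.

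The only delicate point is making sure the hypotheses of Lemma~\ref{lem:matching-completion} hold at every step. Concretely, the degree bound $\Delta=d+2$ must be fixed in advance so that a single $N_0$ works for all $L$ steps. We also need to check that the closing matching above $\{L-1,0\}$ does not join fibers that are already adjacent, which is exactly where $L\geq3$ is used. Because Lemma~\ref{lem:matching-completion} preserves the girth bound for any $G$, no further obstacle arises.
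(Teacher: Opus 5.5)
Your proposal is correct and follows essentially the same argument as the paper. You fix the single threshold $N_0(d+2,g)$ in advance, take $L$ disjoint copies of the expanding high-girth graph $H$, and add the horizontal matchings one base edge at a time via Lemma~\ref{lem:matching-completion}; your order $\{0,1\},\dots,\{L-1,0\}$ is exactly the paper's spanning-path-then-closing-edge order, and your checks of the degree bound, of the absence of prior edges between the two fibers being joined, and of properties (i)--(v) match the paper's.
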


\begin{proof}
Take $d$ and $\gamma$ from Lemma~\ref{lem:expanders-high-girth}, and let $N_0=N_0(d+2,g)$ be supplied by Lemma~\ref{lem:matching-completion}.  Apply Lemma~\ref{lem:expanders-high-girth} with the size threshold $\max\{M_0,N_0\}$ to obtain a connected $d$-regular graph $H$ satisfying its spectral and girth conclusions.  Put $M=|H|$, and begin with the disjoint union of $L$ labeled copies $H_x$, $x\in\mathbb Z/L\mathbb Z$.

Order the edges of $C_L$ so that the first $L-1$ form a spanning path and the last edge closes the cycle.  Add the horizontal perfect matchings in this order.  Inductively, before the matching over a base edge $\{x,x'\}$ is added, there is no edge between $H_x$ and $H_{x'}$, because horizontal edges have so far been added only over previously processed base edges.  This remains true for the final edge which closes the base cycle.  The current graph has girth greater than $g$.  Its maximum degree is at most $d+2$; moreover, every vertex in the two fibers currently being joined has degree at most $d+1$, since at most one of the other base edges incident to that fiber has already been processed.  Lemma~\ref{lem:matching-completion}, applied with $\Delta=d+2$, therefore supplies a perfect matching which preserves girth greater than $g$.  Adding it raises the degree of its endpoints by exactly one, so the maximum degree remains at most $d+2$.  This proves the induction.

After all $L$ matchings have been added, every vertex has exactly two horizontal neighbors and hence degree $d+2$.  Since the base cycle and all fibers are connected, so is $Y$.  Each fiber is a copy of $H$, and the remaining assertions follow immediately from the construction.
\end{proof}

Proposition~\ref{prop:bundle} provides the finite pieces used throughout the rest of the paper.  We now choose their parameters recursively so that cardinality and girth impose incompatible demands on a hypothetical coarse embedding.

\section{The geometric obstruction}\label{sec:obstruction}

Choose integers $L_n$ and finite graph bundles
\[
\pi_n\colon Y_n\longrightarrow C_{L_n}
\]
as follows.  Start with any $L_1\geq 3$.  Having chosen $L_n$, apply Proposition~\ref{prop:bundle} with
\begin{equation}\label{eq:bundle-parameters}
g=nL_n,
\qquad M_0=n,
\end{equation}
to obtain $Y_n$.  After $Y_n$ has been chosen, select $L_{n+1}$ so large that
\begin{equation}\label{eq:length-recursion}
L_{n+1}>(n+1)\sum_{j=1}^{n}|Y_j|.
\end{equation}
Since $|Y_j|\geq L_j$, the recursion also shows that
\begin{equation}\label{eq:length-increasing}
L_1<L_2<L_3<\cdots.
\end{equation}
Let
\begin{equation}\label{eq:X-and-Y}
X=\bigsqcup_{n=1}^{\infty}C_{L_n},
\qquad
Y=\bigsqcup_{n=1}^{\infty}Y_n,
\end{equation}
and equip both sets with coarse disjoint union metrics as in Definition~\ref{def:coarse-disjoint-union}.  Since the degrees of all component graphs are uniformly bounded, $X$ and $Y$ have bounded geometry.

The obstruction inside a high-girth component is encoded by a simple median argument.  We state it separately in order to make the numerical constants transparent.

\begin{lem}\label{lem:tree-median}
Let $C_L$ have cyclically ordered vertices $x_0,\ldots,x_{L-1}$, put $x_L=x_0$, and let $T$ be a tree.  Suppose that $z_0,\ldots,z_L\in T$, $z_L=z_0$, and
\[
d_T(z_i,z_{i+1})\leq R
\]
for every $0\leq i<L$.  If $k=\lfloor L/3\rfloor$, then there are indices $r,s$ such that
\[
d_{C_L}(x_r,x_s)\geq \frac{k}{2}
\qquad\text{and}\qquad
d_T(z_r,z_s)\leq 2R.
\]
\end{lem}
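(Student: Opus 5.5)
The plan is to exploit the tripod structure of geodesic triangles in a tree. Since $L\geq 3$, we have $k=\lfloor L/3\rfloor\geq 1$. Cut the cycle at the indices $0$, $k$ and $2k$ into three closed index arcs
\[
I_1=[0,k],\qquad I_2=[k,2k],\qquad I_3=[2k,L],
\]
where index $L$ is identified with $0$. Each arc has length at least $k$, since $L-2k\geq k$. Let $m\in T$ be the median (centre of the tripod) of $z_0$, $z_k$ and $z_{2k}$. Then $m$ lies on each of the three geodesics $[z_0,z_k]$, $[z_k,z_{2k}]$ and $[z_{2k},z_L]$, where $z_L=z_0$. Here I regard $T$ as a geometric (metric) tree, so that $m$ and geodesic segments make sense; the vertex case is identical.

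The key step is a covering observation. For each arc $I_j=[a,b]$, the union of the geodesic segments $[z_i,z_{i+1}]$ for $a\leq i<b$ is a connected subset of $T$ containing $z_a$ and $z_b$. In a tree, any such connected set contains the geodesic $[z_a,z_b]$, so it contains $m$. Hence $m\in[z_i,z_{i+1}]$ for some $i$ with $i,i+1\in I_j$. Since $d_T(z_i,z_{i+1})\leq R$, one of the two endpoints, call its index $i_j\in I_j$, satisfies $d_T(z_{i_j},m)\leq R$. Doing this for $j=1,2,3$ gives indices $i_1\in I_1$, $i_2\in I_2$, $i_3\in I_3$, and the triangle inequality through $m$ gives $d_T(z_{i_p},z_{i_q})\leq 2R$ for every pair $p,q$.

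It remains to show that some pair among $i_1,i_2,i_3$ has $d_{C_L}(x_{i_p},x_{i_q})\geq k/2$; that pair is then $(r,s)$. Suppose instead that all three pairwise cycle distances are $<k/2$. Because $3\cdot(k/2)<3k\leq L$, three points with pairwise cyclic distance $<k/2$ cannot wrap around the cycle, so they lie in a single arc $J$ of the cycle of length $<k$. But $J$ meets each of $I_1$, $I_2$, $I_3$. The middle arc $I_2$ has length $k$, and the endpoints $0$ and $2k$ are at cyclic distance at least $k$ from each other (across $I_3$, whose length is $\geq k$). An arc of length $<k$ can therefore meet at most two consecutive arcs among $I_1$, $I_2$, $I_3$, and only near a single junction point. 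This contradicts the fact that $J$ meets all three.

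I expect the only delicate point to be this final combinatorial step: the coincidences allowed at the junctions (for example $i_1=i_2=k$) must be handled carefully, and the no-wrap claim should be verified. Both reduce to the inequality $L\geq 3k>3k/2$. The tree part is standard once the median $m$ is introduced.
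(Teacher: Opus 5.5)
Your proof is correct and follows the paper's argument. You use the same median of $z_0,z_k,z_{2k}$ and the same observation that each of the three concatenated walks over $[0,k]$, $[k,2k]$, $[2k,L]$ passes through it, which gives indices $i_1\in[0,k]$, $i_2\in[k,2k]$, $i_3\in[2k,L]$ with $z_{i_j}$ within $R$ of the median. Only the final cycle step differs, and cosmetically: the paper checks directly that either $(r_1,r_2)$ works or else $r_2-r_1<k/2$ forces $r_1>k/2$, so that $(r_1,r_3)$ works, whereas you argue symmetrically by contradiction that three pairwise-close indices would lie in an arc of length $<k$, which cannot meet all three index arcs because the junctions $0,k,2k$ are pairwise at cyclic distance at least $k$.
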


\begin{proof}
Let $c$ be the vertex median of $z_0,z_k,z_{2k}$; thus $c$ belongs to each of the three geodesics joining a pair of these vertices.  For each $i$, join $z_i$ to $z_{i+1}$ by the unique geodesic in $T$.  Concatenating these geodesics over the index intervals $[0,k]$, $[k,2k]$, and $[2k,L]$ gives three walks joining the corresponding pairs among $z_0,z_k,z_{2k}$.  A walk in a tree contains the unique geodesic between its endpoints.  Consequently, there are
\[
r_1\in[0,k],\qquad r_2\in[k,2k],\qquad r_3\in[2k,L]
\]
such that $d_T(z_{r_i},c)\leq R$ for $i=1,2,3$.

If $d_{C_L}(x_{r_1},x_{r_2})\geq k/2$, choose $r=r_1$ and $s=r_2$.  Otherwise, since $0\leq r_1\leq k\leq r_2\leq 2k$ and $L\geq 3k$, we have
\[
L-(r_2-r_1)\geq L-2k\geq k.
\]
Thus the inequality $d_{C_L}(x_{r_1},x_{r_2})<k/2$ forces $r_2-r_1<k/2$, and hence $r_1>k/2$.  Now $r_3-r_1\geq k$, while
\[
L-(r_3-r_1)=(L-r_3)+r_1>\frac{k}{2}.
\]
Thus $d_{C_L}(x_{r_1},x_{r_3})>k/2$, and we choose $r=r_1$, $s=r_3$.  In either case both selected vertices are within distance $R$ of $c$, so their mutual distance is at most $2R$.
\end{proof}

The recursive size condition, together with the uniform finite-to-one bound enjoyed by every coarse embedding from a bounded-geometry space, prevents large components of $X$ from mapping into the finitely many exceptional components of $Y$.  Once an image enters a sufficiently remote component, controlledness traps the entire connected cycle there, and the girth estimate allows Lemma~\ref{lem:tree-median} to be applied.

\begin{prop}\label{prop:no-embedding}
There is no coarse embedding $f\colon X\to Y$.
\end{prop}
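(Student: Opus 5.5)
Suppose, for a contradiction, that $f\colon X\to Y$ is a coarse embedding with control functions $\rho_\pm$, and put $R=\rho_+(1)$. Adjacent vertices of each cycle $C_{L_n}$ then have images at distance at most $R$. By \eqref{eq:effective-proper} with $r=0$, together with bounded geometry of $X$, the fibres of $f$ have cardinality at most some constant $D$. The remaining quantitative input is the number $s=s(2R)$ from \eqref{eq:effective-proper}: whenever $d_Y(f(x),f(x'))\le 2R$ we have $d_X(x,x')\le s$.

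\textbf{Step 1: the image of a large cycle lies in one component of index at least $n$.} Since the gaps in the coarse disjoint union tend to infinity, there is $n_1$ such that every component $Y_m$ with $m\ge n_1$ lies at distance greater than $R$ from every other component. The finitely many components $Y_1,\dots,Y_{n_1-1}$ have total cardinality $S$, so at most $DS$ points of $X$ map into them. Take $n$ large, so that $L_n>DS$, $n\ge n_1$, and $n>D$.

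\begin{itemize}
\item Some vertex of $C_{L_n}$ maps into some $Y_m$ with $m\ge n_1$, because $L_n>DS$.
\item Consecutive images are within distance $R$, and $Y_m$ is $R$-separated from every other component. Walking around the connected cycle therefore keeps the whole image $f(C_{L_n})$ inside $Y_m$.
\item We must have $m\ge n$. Otherwise $f(C_{L_n})\subseteq Y_m$ with $m<n$, and then
\[
L_n\le D|Y_m|\le D\sum_{j<n}|Y_j|<L_n
\]
by \eqref{eq:length-recursion}, since $n>D$. This is a contradiction.
\end{itemize}

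\textbf{Step 2: lift the image into a tree.} Choose geodesic paths in $Y_m$ between consecutive images $f(x_i),f(x_{i+1})$; each has length at most $R$, and we need to work within the graph metric of $Y_m$. For large $n$ the distance between distinct components of $Y$ exceeds $R$. Hence $d_Y$ and $d_{Y_m}$ agree on pairs at distance at most $R$: a shortest route leaving $Y_m$ would pass through another component and cost more than $R$. Concatenating these paths gives a closed walk in $Y_m$ of length at most $RL_n$.

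By \eqref{eq:bundle-parameters}, $\girth(Y_m)>mL_m\ge nL_n$. Once $n\ge R$, this exceeds the length $RL_n$ of the walk. The closed walk lies in the ball of radius $RL_n/2$ about $f(x_0)$, and that ball is a tree, since otherwise a cycle of length at most $RL_n+1$ would exist. I expect this to be the main obstacle: the walk must be placed in a genuinely tree-like region with distances matching $d_T$. The cleanest way is to pass to the universal cover of $Y_m$, a $(d+2)$-regular tree $T$, and lift the walk. Lifting gives points $z_0,\dots,z_{L_n}$ with $d_T(z_i,z_{i+1})\le R$. The endpoint $z_{L_n}$ equals $z_0$ because the closed walk has length less than the girth, so it lifts to a closed walk. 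The covering map is $1$-Lipschitz, so $d_{Y_m}(f(x_r),f(x_s))\le d_T(z_r,z_s)$.

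\textbf{Step 3: apply the median lemma.} Lemma~\ref{lem:tree-median} produces indices $r,s$ with
\[
d_{C_{L_n}}(x_r,x_s)\ge \lfloor L_n/3\rfloor/2
\qquad\text{and}\qquad
d_T(z_r,z_s)\le 2R.
\]
Projecting to $Y_m$ gives $d_Y(f(x_r),f(x_s))\le 2R$, so by the choice of $s$ we get $d_X(x_r,x_s)\le s$. Since $L_n\to\infty$ by \eqref{eq:length-increasing}, for large $n$ the lower bound $\lfloor L_n/3\rfloor/2$ exceeds $s$. This contradicts the previous inequality, so no coarse embedding $f\colon X\to Y$ exists.
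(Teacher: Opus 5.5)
Your proof is correct, and Steps~1 and~3 follow the paper's argument essentially step for step. The uniform fibre bound and the counting against \eqref{eq:length-recursion} force $f(C_{L_n})$ into a single remote component $Y_m$ with $m\geq n$. Lemma~\ref{lem:tree-median} then contradicts effective properness at scale $2R$.

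The genuine difference is how you obtain a tree in Step~2. The paper stays inside $Y_m$: it lets $K$ be the union of the chosen geodesics, a connected subgraph with at most $RL_n$ edges. Since $\girth(Y_m)>RL_n$, $K$ contains no cycle and so is a tree. The closing condition $z_{L_n}=z_0$ is then automatic, and $d_Y\leq d_K$ gives the $2R$ bound.

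You instead lift the closed walk to the universal covering tree. That works, but it relies on a fact you only assert: a closed walk shorter than the girth lifts to a closed walk. This is true. Free reduction yields a homotopic reduced closed walk of no greater length. If that reduced walk were nontrivial, its segment up to the first repeated vertex would be a simple cycle of length at least $3$ and at most $RL_n$, which the girth bound excludes. The paper's subgraph argument avoids covering theory and this extra lemma. Your version is more conceptual but uses exactly the same inequality, walk length $<$ girth.

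Two side remarks need cleaning up. First, your aside that the ball of radius $RL_n/2$ about $f(x_0)$ is a tree is off by one. The bound $\girth(Y_m)>RL_n$ does not exclude a cycle of length $RL_n+1$, and such a cycle can lie in a ball of that radius; you would need, say, $n\geq R+1$. Since you never use the ball, just delete it. Second, no argument is needed for the agreement of $d_Y$ and $d_{Y_m}$. By Definition~\ref{def:coarse-disjoint-union}, the coarse disjoint union metric restricts to the graph metric of $Y_m$ on $Y_m$, so the ``shortest route'' heuristic is unnecessary.
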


\begin{proof}
Suppose, toward a contradiction, that $f\colon X\to Y$ is a coarse embedding.  Controlledness on pairs at distance one gives an integer $R\geq 1$ such that
\begin{equation}\label{eq:edge-control}
d_Y(f(x),f(x'))\leq R
\end{equation}
whenever $x,x'$ are adjacent in one of the cycles $C_{L_n}$.  Effective properness, applied at scale $2R$, gives $S\geq 0$ such that
\begin{equation}\label{eq:proper-control}
d_Y(f(x),f(x'))\leq 2R
\quad\Longrightarrow\quad
d_X(x,x')\leq S.
\end{equation}
Applying \eqref{eq:effective-proper} at scale $0$, choose $S_0\geq 0$ such that
\[
f(x)=f(x')\quad\Longrightarrow\quad d_X(x,x')\leq S_0.
\]
Since $X$ has bounded geometry, the number
\begin{equation}\label{eq:fiber-multiplicity}
N_f=\sup_{x\in X}|B_X(x,S_0)|
\end{equation}
is finite.  Every fiber of $f$ has cardinality at most $N_f$.

Since $Y$ is a coarse disjoint union, there is a finite union $D$ of components such that every component $Y_m\subseteq Y\setminus D$ is at distance greater than $R$ from every component other than $Y_m$.  Enlarge $D$ to $Y_1\sqcup\cdots\sqcup Y_{n_0-1}$.  Choose $n>\max\{n_0,R,N_f\}$ so large that
\begin{equation}\label{eq:choose-n}
\frac12\left\lfloor\frac{L_n}{3}\right\rfloor>S.
\end{equation}
By \eqref{eq:length-recursion}, with $n-1$ in place of $n$,
\[
L_n>n\sum_{j<n}|Y_j|.
\]
If $f(C_{L_n})$ were contained in $D$, then the multiplicity bound \eqref{eq:fiber-multiplicity} would give
\[
L_n\leq N_f|D|
\leq N_f\sum_{j<n}|Y_j|<L_n,
\]
a contradiction.  Choose $x\in C_{L_n}$ with $f(x)\notin D$, and let $Y_m$ be the component containing $f(x)$.  By the defining property of $D$, every other component of $Y$ is at distance greater than $R$ from $Y_m$.  Since consecutive vertices of $C_{L_n}$ have images at distance at most $R$ by \eqref{eq:edge-control}, connectedness of the cycle implies
\[
f(C_{L_n})\subseteq Y_m.
\]
In particular, $m\geq n_0$.  Moreover, $m\geq n$: if $m<n$, then \eqref{eq:fiber-multiplicity} and the preceding growth estimate give
\[
L_n\leq N_f|Y_m|
\leq N_f\sum_{j<n}|Y_j|<L_n,
\]
again a contradiction.

Write the consecutive vertices of $C_{L_n}$ as $x_0,\ldots,x_{L_n-1}$ and put $z_i=f(x_i)$.  For each $i$, choose in $Y_m$ a geodesic of length at most $R$ from $z_i$ to $z_{i+1}$, with indices read cyclically; such geodesics exist by \eqref{eq:edge-control}.  Let $K$ be the union of these paths.  It is connected and has at most $RL_n$ edges.  On the other hand, Proposition~\ref{prop:bundle} and \eqref{eq:bundle-parameters} give
\[
\girth(Y_m)>mL_m\geq nL_n>RL_n,
\]
where the middle inequality uses \eqref{eq:length-increasing} and the last one uses $n>R$.  Any cycle in $K$ would contain a simple cycle of length at most $|E(K)|\leq RL_n$, contradicting the girth estimate.  Hence $K$ is a tree.  Endow $K$ with its intrinsic graph metric.  Each chosen geodesic lies in $K$, so consecutive $z_i$ are at $K$-distance at most $R$.

Apply Lemma~\ref{lem:tree-median} to the vertices $z_i$ in $K$.  It produces indices $r,s$ with
\[
d_X(x_r,x_s)=d_{C_{L_n}}(x_r,x_s)
\geq \frac12\left\lfloor\frac{L_n}{3}\right\rfloor>S
\]
by \eqref{eq:choose-n}, and, since the ambient distance is no larger than the intrinsic distance in $K$,
\[
d_Y(f(x_r),f(x_s))\leq 2R.
\]
This contradicts \eqref{eq:proper-control} and completes the proof.
\end{proof}

The remaining task is operator-algebraic: despite the geometric obstruction in Proposition~\ref{prop:no-embedding}, the base space $X$ must reappear exactly as a hereditary corner of $\Cu(Y)$.  The expanding fibers are designed precisely for this purpose.

\section{The uniform Roe corner}\label{sec:corner}

For $x\in C_{L_n}$, write
\[
F_x=\pi_n^{-1}(x),
\qquad M_n=|F_x|.
\]
Define an isometry $U\colon\ell^2(X)\to\ell^2(Y)$ by
\begin{equation}\label{eq:isometry-U}
U\delta_x=\xi_x:=\frac{1}{\sqrt{M_n}}\sum_{y\in F_x}\delta_y,
\qquad x\in C_{L_n},
\end{equation}
and put
\begin{equation}\label{eq:q-definition}
q=UU^*.
\end{equation}
Thus $q$ is the direct sum of the projections onto the constant vectors in the fibers.

The first point is that the infinite direct sum $q$ belongs to the uniform Roe algebra.  This is where the uniform spectral gap, rather than mere connectedness of the fibers, is essential.  The argument is the fiberwise form of the spectral-projection construction behind the standard expander ghost examples; compare \cite[Section~3]{RW14}.

\begin{lem}\label{lem:q-in-roe}
The projection $q$ belongs to $\Cu(Y)$.
\end{lem}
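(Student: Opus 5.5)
We will realize $q$ as a norm limit of polynomials in a single finite-propagation operator. Let $A_V$ be the vertical adjacency operator on $\ell^2(Y)$: it acts as $A_{H}$ on each fiber $F_x$ and has no horizontal matrix entries. Put
\[
\Delta_V=I-d^{-1}A_V .
\]
The operator $\Delta_V$ has propagation at most $1$ and norm at most $2$, so it lies in $\Cu(Y)$. Since every $Y_n$ is finite, $\ell^2(Y)$ is the orthogonal direct sum of the finite-dimensional spaces $\ell^2(F_x)$, with $x$ ranging over all base vertices of all components. Each of these spaces is invariant under $\Delta_V$, and the restriction to $\ell^2(F_x)$ is unitarily equivalent, via the fiber identification, to the normalized Laplacian $\Delta_{H_n}$ of the fiber graph of $Y_n$.

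By Proposition~\ref{prop:bundle}(ii), all fibers of all components share the same constants $d$ and $\gamma$. Hence
\[
\sigma(\Delta_V)\subseteq\overline{\bigcup_n\sigma(\Delta_{H_n})}\subseteq\{0\}\cup[\gamma,2].
\]
The kernel of $\Delta_V$ restricted to $\ell^2(F_x)$ consists of the constant vectors, because $0$ is a simple eigenvalue of the connected graph $H_n$. Therefore the spectral projection $\chi_{\{0\}}(\Delta_V)$ is exactly the direct sum of the projections onto $\mathbb C\xi_x$, which is $q=UU^*$ by \eqref{eq:q-definition}.

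Because $0$ is isolated in $\sigma(\Delta_V)$, the function $h=\chi_{\{0\}}$ is continuous on $\sigma(\Delta_V)$. Continuous functional calculus therefore gives $q=h(\Delta_V)\in C^*(\Delta_V,I)$. To place $q$ explicitly in $\Cu(Y)$, we use the Weierstrass theorem to choose polynomials $p_k$ with
\[
\sup_{t\in\{0\}\cup[\gamma,2]}|p_k(t)-h(t)|\to0 .
\]
Then $\|p_k(\Delta_V)-q\|\to0$. Each $p_k(\Delta_V)$ has propagation at most $\deg p_k$, so it lies in $\Cu(Y)$, and so does the norm limit $q$.

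The main point to check is that the spectral gap is uniform across all components. The block-diagonal decomposition shows that the spectrum of $\Delta_V$ is the closure of the union of the spectra of its blocks. Without the uniform constant $\gamma$, eigenvalues of the blocks could accumulate at $0$, and $h$ would then fail to be continuous on the spectrum. Proposition~\ref{prop:bundle} fixes $\gamma$ independently of $L$, $g$ and $M_0$, which is exactly what rules this out. A polynomial has finite propagation in every component simultaneously, with the same bound, so no further uniformity issue arises.
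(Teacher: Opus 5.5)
Your proposal is correct and is essentially the paper's own proof: the fiberwise normalized Laplacian has propagation at most one, the uniform gap $\gamma$ from Proposition~\ref{prop:bundle} isolates $0$ in its spectrum, and continuous functional calculus with a function equal to $1$ at $0$ and vanishing on $[\gamma,2]$ recovers $q$. Your explicit Weierstrass polynomial approximation only spells out the inclusion $C^*(\Delta_v,1)\subseteq\Cu(Y)$, which the paper uses without comment.
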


\begin{proof}
Let $\Delta_v$ act on $\ell^2(Y)$ as the normalized graph Laplacian inside each fiber and have no horizontal matrix entries.  Since vertical edges are edges of $Y$, one has
\[
\operatorname{prop}(\Delta_v)\leq 1,
\]
and hence $\Delta_v\in\Cu(Y)$.  Proposition~\ref{prop:bundle} gives the uniform spectral estimate
\[
\sigma(\Delta_v)\subseteq\{0\}\cup[\gamma,2].
\]
Choose $h\in C([0,2])$ with $h(0)=1$ and $h|_{[\gamma,2]}=0$.  Functional calculus gives
\[
h(\Delta_v)=q\in C^*(\Delta_v,1)\subseteq\Cu(Y).
\]
Indeed, on each connected fiber $h(\Delta_v)$ is precisely the projection onto the normalized constant vector.
\end{proof}

We next show that compression cannot create uncontrolled propagation on the base.  Cross-component terms cause no difficulty because bounded propagation sees only finitely many of them.

\begin{lem}\label{lem:compression}
For every $T\in\Cu(Y)$, one has
\[
U^*TU\in\Cu(X).
\]
Consequently,
\[
q\Cu(Y)q\subseteq U\Cu(X)U^*.
\]
\end{lem}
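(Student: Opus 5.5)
The plan is to reduce to finite-propagation operators by continuity and then handle those by a direct matrix-coefficient computation. The map $T\mapsto U^*TU$ is linear and contractive, and $\Cu(X)$ is norm closed. It therefore suffices to prove $U^*TU\in\Cu(X)$ when $\operatorname{prop}(T)=r<\infty$. For such $T$ and $x\in C_{L_n}$, $x'\in C_{L_m}$, the coefficients are
\[
\langle U^*TU\delta_{x'},\delta_x\rangle=\langle T\xi_{x'},\xi_x\rangle
=\frac{1}{\sqrt{M_nM_m}}\sum_{y\in F_x,\ y'\in F_{x'}}\langle T\delta_{y'},\delta_y\rangle .
\]
This vanishes unless $d_Y(F_x,F_{x'})\leq r$.

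The main step is to show that $d_Y(F_x,F_{x'})\leq r$ forces $d_X(x,x')$ to be bounded in terms of $r$, up to a finite error.

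\textbf{Same component.} If $n=m$, then $\pi_n$ is $1$-Lipschitz by Proposition~\ref{prop:bundle}(v). Since the metric of the coarse disjoint union restricts to the graph metric on $Y_n$, we get $d_{C_{L_n}}(x,x')\leq d_{Y_n}(y,y')\leq r$ for suitable $y\in F_x$ and $y'\in F_{x'}$.

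\textbf{Different components.} If $n\neq m$, the gap condition of Definition~\ref{def:coarse-disjoint-union} gives $d_Y(Y_n,Y_m)\geq|t_n-t_m|$. This exceeds $r$ except for finitely many pairs $(n,m)$. For those finitely many pairs, the relevant points lie in the finite set $C_{L_1}\sqcup\cdots\sqcup C_{L_{n_1}}$, so their distances in $X$ are bounded by some finite constant.

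It follows that $U^*TU$ has finite propagation, at most $\max\{r,D\}$ with $D$ the diameter of that finite set. (If one prefers, the finitely many cross-component entries form a finite-rank operator, which lies in $\Cu(X)$ by \eqref{eq:compacts-in-roe}.) I expect this bookkeeping of cross-component terms to be the only delicate point, and it is handled by the gap condition as above. Boundedness of $U^*TU$ is automatic since $\|U\|=1$.

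For the consequence, let $T\in\Cu(Y)$. Using $U^*U=1$ and $q=UU^*$ we get
\[
qTq=U(U^*TU)U^*,
\]
and $U^*TU\in\Cu(X)$ by the first part. Hence $qTq\in U\Cu(X)U^*$, which gives $q\Cu(Y)q\subseteq U\Cu(X)U^*$.
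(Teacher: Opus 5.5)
Your proposal is correct and follows essentially the same route as the paper. You reduce to finite propagation by norm continuity, use the $1$-Lipschitz bundle projection for entries within a single component, and use the gap condition to confine cross-component entries to finitely many components; the only cosmetic difference is that you absorb those cross-component entries into a global propagation bound $\max\{r,D\}$, whereas the paper splits them off as a finite-rank operator (the alternative you also mention).
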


\begin{proof}
It suffices first to consider an operator $T$ with propagation at most $r$.  For components, write $d(Y_n,Y_m)=\inf\{d(y,z):y\in Y_n,\ z\in Y_m\}$.  Let $I_r$ be the set of indices $n$ for which $d(Y_n,Y_m)\leq r$ for some $m\neq n$.  The gap condition in Definition~\ref{def:coarse-disjoint-union} implies that $I_r$ is finite.  Decompose $U^*TU$ into its block-diagonal part with respect to
\[
\ell^2(X)=\bigoplus_n\ell^2(C_{L_n})
\]
and its off-diagonal part.  Every cross-component matrix coefficient has both component indices in $I_r$.  Hence the off-diagonal part acts on the finite-dimensional space
\[
\bigoplus_{n\in I_r}\ell^2(C_{L_n}),
\]
and is therefore finite rank and belongs to $\Cu(X)$ by \eqref{eq:compacts-in-roe}.

Suppose next that $x,x'\in C_{L_n}$ and
\[
\langle U^*TU\delta_{x'},\delta_x\rangle\neq 0.
\]
Then some matrix coefficient of $T$ between $F_{x'}$ and $F_x$ is nonzero.  The bundle projection $\pi_n$ is $1$-Lipschitz, so
\[
d_{C_{L_n}}(x,x')\leq r.
\]
Thus the block-diagonal part of $U^*TU$ has propagation at most $r$.  This proves $U^*TU\in\Cu(X)$ for finite-propagation $T$.  Approximation in norm proves the assertion for arbitrary $T\in\Cu(Y)$.

Finally,
\[
qTq=UU^*TUU^*=U(U^*TU)U^*,
\]
which gives the stated inclusion.
\end{proof}

For the reverse inclusion, horizontal matchings lift every bounded displacement on a base cycle to a bounded-propagation partial translation of the bundle.  We give the argument explicitly, thereby avoiding any appeal to an abstract decomposition theorem for controlled sets.

\begin{lem}\label{lem:lifting}
For every $a\in\Cu(X)$, there is $\widetilde a\in\Cu(Y)$ such that
\[
q\widetilde a q=UaU^*.
\]
Consequently,
\[
U\Cu(X)U^*\subseteq q\Cu(Y)q.
\]
\end{lem}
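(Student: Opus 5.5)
We first treat finite-propagation operators $a\in\Cu(X)$ and then pass to norm limits. Fix $a$ with $\operatorname{prop}(a)\leq r$. As in the proof of Lemma~\ref{lem:compression}, we split $a$ into a block-diagonal part with respect to $\ell^2(X)=\bigoplus_n\ell^2(C_{L_n})$ and an off-diagonal part.

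The off-diagonal part is supported on the finitely many components indexed by $I_r$, so it is finite rank. For this part we take $\widetilde a=UaU^*$ itself. This operator is finite rank, since $U$ maps $\bigoplus_{n\in I_r}\ell^2(C_{L_n})$ into a finite-dimensional space. It therefore lies in $\Cu(Y)$ by \eqref{eq:compacts-in-roe}, and it trivially satisfies $q\widetilde aq=UaU^*$.

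The block-diagonal part needs a lift of bounded displacements. For $t\in\{-r,\ldots,r\}$ with $r<L_n/2$ (finitely many small $n$ can again be absorbed into a finite-rank term), we write the $n$-th diagonal block as
\[
a_n=\sum_{|t|\leq r}D_{n,t}S_{n,t},
\]
where $S_{n,t}$ is the rotation $\delta_x\mapsto\delta_{x+t}$ of $C_{L_n}$ and $D_{n,t}$ is diagonal with entries $\langle a\delta_x,\delta_{x+t}\rangle$. These entries are bounded by $\|a\|$.

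We lift $S_{n,t}$ as follows. Composing the horizontal perfect matchings from Proposition~\ref{prop:bundle}(iii) along the arc from $x$ to $x+t$ gives a bijection $\sigma_{x,t}\colon F_x\to F_{x+t}$ with $d_Y(y,\sigma_{x,t}(y))\leq|t|$. Let $V_{n,t}$ be the permutation operator $\delta_y\mapsto\delta_{\sigma_{x,t}(y)}$ for $y\in F_x$. Let $\widetilde D_{n,t}$ be the diagonal operator that is constant, equal to $D_{n,t}(x)$, on the fiber $F_x$. Since a fiberwise bijection sends the uniform vector $\xi_x$ to $\xi_{x+t}$, and $M_n$ is constant along the bundle, we get $V_{n,t}U\delta_x=U\delta_{x+t}$ and $\widetilde D_{n,t}U=UD_{n,t}$. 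Setting
\[
\widetilde a=\bigoplus_n\sum_{|t|\leq r}\widetilde D_{n,t}V_{n,t},
\]
the operator $\widetilde a$ has propagation at most $r$ and norm at most $(2r+1)\|a\|$, so it lies in $\Cu(Y)$. It also satisfies $\widetilde aU=Ua$, and hence $q\widetilde aq=UaU^*UU^*=UaU^*$.

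Finally, for general $a\in\Cu(X)$ we do not need a norm-controlled choice of lifts. The map $T\mapsto qTq$ is linear and $q\Cu(Y)q$ is norm closed (it is the image of the closed set $\Cu(Y)$ under a compression that fixes $q\Cu(Y)q$). Choosing finite-propagation $a_k\to a$, the elements $Ua_kU^*\in q\Cu(Y)q$ converge to $UaU^*$, so $UaU^*\in q\Cu(Y)q$. Writing $UaU^*=q(UaU^*)q$ with $\widetilde a=UaU^*$ then gives the claim. The inclusion $U\Cu(X)U^*\subseteq q\Cu(Y)q$ follows at once.

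The main obstacle is the bookkeeping that keeps the lifted translations uniformly bounded in propagation and compatible with the fiber vectors $\xi_x$. This is exactly where the perfect matchings and the constant fiber size $M_n$ are used.
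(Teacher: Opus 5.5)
Your proof is correct and follows essentially the paper's route. You split off a finite-rank cross-component piece, lift bounded base displacements by composing the horizontal perfect matchings (which carry $\xi_x$ to $\xi_{x+t}$ because all fibers have the same size $M_n$), and pass to general $a$ using norm-closedness of $q\Cu(Y)q$, which relies on $q\in\Cu(Y)$ from Lemma~\ref{lem:q-in-roe}; the only difference is bookkeeping: you absorb the finitely many cycles with $L_n\leq 2r$ into the finite-rank term and write each remaining block as $\sum_{|t|\leq r}D_{n,t}S_{n,t}$ with full rotations lifted to fiber-permuting unitaries, whereas the paper groups coefficients by a tie-broken signed displacement into weighted partial translations, which treats every cycle uniformly.
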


\begin{proof}
Suppose first that $a$ has propagation at most $r$.  Enlarging $r$ if necessary, we assume that $r$ is a nonnegative integer.  Let $P_n$ denote the projection onto $\ell^2(C_{L_n})$.  Since distinct components are eventually farther than $r$ apart, only finitely many cross-component blocks $P_naP_m$, $n\neq m$, are nonzero.  Hence
\[
k=\sum_{n\neq m}P_naP_m
\]
is a finite sum of blocks between finite-dimensional spaces, and is therefore finite rank.  Since $UkU^*$ is compact, \eqref{eq:compacts-in-roe} applied to $Y$ gives
\[
UkU^*\in\Cu(Y),
\qquad q(UkU^*)q=UkU^*.
\]
It remains to treat the block-diagonal operator $a_0=a-k$.

For each pair $(x,x')$ with
\[
\langle a_0\delta_{x'},\delta_x\rangle\neq0,
\]
choose a shortest oriented path in the relevant base cycle from $x'$ to $x$,
resolving the antipodal tie deterministically. Its signed length is an integer
$j(x,x')\in\{-r,\ldots,r\}$. For fixed $j$, put
\[
R_j=\bigl\{(x,x'): \langle a_0\delta_{x'},\delta_x\rangle\neq0
\text{ and }j(x,x')=j\bigr\}.
\]
Then $R_j$ is the graph of a partial bijection, since on each base cycle it is
a restriction of the translation $x'\mapsto x'+j$. Grouping coefficients by
the chosen displacement writes
\[
a_0=\sum_{j=-r}^{r} a^{(j)},
\]
where each $a^{(j)}$ is the weighted partial translation supported on $R_j$:
on its domain it sends a basis vector $\delta_{x'}$ to
$a_{x,x'}\delta_x$, and the underlying map $x'\mapsto x$ is injective. In
particular, every $a^{(j)}$ is bounded, with
\[
\|a^{(j)}\|
=\sup_{(x,x')\in R_j}|a_{x,x'}|
\leq\|a\|.
\]

Along every oriented base edge, the horizontal matching gives a bijection between the corresponding fibers.  Composing these bijections along the chosen path from $x'$ to $x$ yields a bijection
\[
\theta_{x,x'}\colon F_{x'}\longrightarrow F_x.
\]
Lift $a^{(j)}$ to $\widetilde a^{(j)}$ by setting
\[
\widetilde a^{(j)}\delta_y
=a_{x,x'}\delta_{\theta_{x,x'}(y)}
\]
for $y\in F_{x'}$ whenever $(x,x')\in R_j$, and by setting it equal to zero
elsewhere.  Injectivity of the base partial translation and bijectivity of every $\theta_{x,x'}$ show that $\widetilde a^{(j)}$ is a weighted partial translation, with $\|\widetilde a^{(j)}\|=\|a^{(j)}\|$.  Its propagation is at most $r$, since the graph of $\theta_{x,x'}$ is realized by the horizontal path of that length.  Therefore
\[
\widetilde a_0=\sum_{j=-r}^{r}\widetilde a^{(j)}\in\Cu(Y).
\]

The lift maps the normalized constant vector of $F_{x'}$ to $a_{x,x'}$ times the normalized constant vector of $F_x$.  It follows that
\[
U^*\widetilde a_0U=a_0,
\qquad q\widetilde a_0q=Ua_0U^*.
\]
Taking $\widetilde a=\widetilde a_0+UkU^*$ proves the assertion for finite-propagation $a$.

For a general $a\in\Cu(X)$, choose finite-propagation $a_n\to a$.  The preceding construction need not produce a norm-convergent sequence of lifts, but this is unnecessary: it already proves that every $Ua_nU^*$ belongs to $q\Cu(Y)q$.  The map $T\mapsto qTq$ is a bounded idempotent on $\Cu(Y)$, so its range $q\Cu(Y)q$ is norm closed.  Hence
\[
UaU^*=\lim_{n\to\infty}Ua_nU^*\in q\Cu(Y)q\subseteq\Cu(Y).
\]
Since the limit already lies in $q\Cu(Y)q$, we may take $\widetilde a=UaU^*$; it is fixed by compression with $q$ and satisfies $q\widetilde a q=UaU^*$.
\end{proof}

Combining the two inclusions yields the exact corner identity.  This is the operator-algebraic core of the construction.

\begin{prop}\label{prop:corner-identity}
The map
\[
\Phi\colon\Cu(X)\longrightarrow q\Cu(Y)q,
\qquad
\Phi(a)=UaU^*,
\]
is a $*$-isomorphism.  Moreover, $q\Cu(Y)q$ is a hereditary $C^*$-subalgebra of $\Cu(Y)$.
\end{prop}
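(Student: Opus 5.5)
The map $\Phi(a)=UaU^*$ is first checked to be a well-defined, injective $*$-homomorphism into $B(\ell^2(Y))$. Since $U$ is an isometry, $U^*U=1$, so
\[
\Phi(a)\Phi(b)=UaU^*UbU^*=UabU^*,\qquad \Phi(a^*)=\Phi(a)^*,
\]
and $\Phi$ is multiplicative and $*$-preserving. It is injective because $U^*\Phi(a)U=a$, and isometric for the same reason, since $\|a\|=\|U^*\Phi(a)U\|\leq\|\Phi(a)\|\leq\|a\|$. These points are routine.

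Next I would identify the range. Lemma~\ref{lem:lifting} gives
\[
\Phi(\Cu(X))=U\Cu(X)U^*\subseteq q\Cu(Y)q,
\]
so $\Phi$ takes values in $q\Cu(Y)q$. Conversely, Lemma~\ref{lem:compression} gives
\[
q\Cu(Y)q\subseteq U\Cu(X)U^*=\Phi(\Cu(X)).
\]
The two inclusions together say that $\Phi$ is a bijective $*$-homomorphism onto $q\Cu(Y)q$, hence a $*$-isomorphism. Here Lemma~\ref{lem:q-in-roe} guarantees that $q\in\Cu(Y)$, so $q\Cu(Y)q$ is a genuine $C^*$-subalgebra of $\Cu(Y)$. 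It is norm closed as the range of the bounded idempotent $T\mapsto qTq$, and it is visibly closed under products and adjoints.

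For heredity I would use the standard corner argument for a projection $q\in A=\Cu(Y)$. Suppose $0\leq a\leq b$ with $a\in A$ and $b=qbq$. Compressing by $1-q$, computed in $B(\ell^2(Y))$, gives
\[
0\leq(1-q)a(1-q)\leq(1-q)b(1-q)=0,
\]
so $\|a^{1/2}(1-q)\|^2=\|(1-q)a(1-q)\|=0$. Hence $a^{1/2}(1-q)=0$, so $a(1-q)=0=(1-q)a$ and $a=qaq\in q\Cu(Y)q$.

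The only potential subtlety is the use of $1-q$, which need not lie in $\Cu(Y)$ if one avoids units. Since $\Cu(Y)$ is unital anyway, with $1$ of propagation $0$, there is no real obstacle; the substantive work was already done in Lemmas~\ref{lem:q-in-roe}, \ref{lem:compression}, and~\ref{lem:lifting}.
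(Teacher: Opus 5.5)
Your proposal is correct and follows essentially the same route as the paper: the two inclusions from Lemmas~\ref{lem:compression} and~\ref{lem:lifting} identify the range, $U^*U=1$ gives injectivity, and heredity comes from compressing by $1-q$ to get $c^{1/2}(1-q)=0$. Your extra remarks (that $\Phi$ is isometric, that the corner is norm closed, and that $q\in\Cu(Y)$ by Lemma~\ref{lem:q-in-roe}) are harmless details the paper leaves implicit.
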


\begin{proof}
Lemmas~\ref{lem:compression} and~\ref{lem:lifting} give
\[
q\Cu(Y)q=U\Cu(X)U^*.
\]
Since $U$ is an isometry, conjugation by $U$ is an injective $*$-homomorphism and is therefore an isomorphism onto this corner.

To verify heredity directly, let $A=\Cu(Y)$ and suppose that $0\leq c\leq b$ with $c\in A$ and $b\in qAq$.  Then
\[
0\leq (1-q)c(1-q)\leq (1-q)b(1-q)=0,
\]
so $(1-q)c(1-q)=0$.  Positivity implies $c^{1/2}(1-q)=0$, and hence
\[
c=qcq\in qAq.
\]
Thus $qAq$ is hereditary.
\end{proof}

We can now assemble the geometric and operator-algebraic parts of the argument.

\begin{proof}[Proof of Theorem~\ref{thm:main}]
Take the bounded-geometry metric spaces $X$ and $Y$ defined in \eqref{eq:X-and-Y}, and let $q$ be the projection in \eqref{eq:q-definition}.  Lemma~\ref{lem:q-in-roe} gives $q\in\Cu(Y)$.  Proposition~\ref{prop:corner-identity} shows that
\[
B=q\Cu(Y)q
\]
is hereditary in $\Cu(Y)$ and that $a\mapsto UaU^*$ is a $*$-isomorphism from $\Cu(X)$ onto $B$.  Proposition~\ref{prop:no-embedding} shows that no coarse embedding $X\to Y$ exists.  All the assertions follow.
\end{proof}

We conclude the negative part by recording how the example sits relative to the compact-ghost hypotheses of Definition~\ref{def:ghost}.

\begin{prop}\label{prop:ghost}
The projection $q$ is a noncompact ghost projection.
\end{prop}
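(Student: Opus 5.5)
We need three things: $q$ is a projection in $\Cu(Y)$, $q$ is a ghost, and $q$ is not compact. The first is Lemma~\ref{lem:q-in-roe} together with the definition $q=UU^*$ for the isometry $U$ of \eqref{eq:isometry-U}. So the work lies in computing matrix coefficients of $q$ and in a rank count.

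For the ghost property we compute the coefficients explicitly. Since $q$ is the direct sum over all base points $x\in X$ of the rank-one projections $\xi_x\xi_x^*$, we get, for $y,y'\in Y$,
\[
\langle q\delta_{y'},\delta_y\rangle=
\begin{cases}
M_n^{-1}, & y,y'\in F_x\text{ for some }x\in C_{L_n},\\
0, & \text{otherwise.}
\end{cases}
\]
Fix $\varepsilon>0$. By \eqref{eq:bundle-parameters} we have $M_n\geq M_0=n$, so $M_n^{-1}<\varepsilon$ as soon as $n>1/\varepsilon$. Take the finite set $F=Y_1\sqcup\cdots\sqcup Y_{n_\varepsilon}$ with $n_\varepsilon=\lceil 1/\varepsilon\rceil$. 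Every nonzero coefficient involving a pair $y,y'\notin F$ comes from a fiber in some $Y_n$ with $n>n_\varepsilon$, so its absolute value is $M_n^{-1}<\varepsilon$. Hence $q$ is a ghost in the sense of Definition~\ref{def:ghost}.

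For noncompactness, observe that $q$ has infinite rank. Its range contains the orthonormal family $\{\xi_x:x\in X\}$, and $X$ is infinite. A compact projection has finite rank, so $q$ is not compact. Equivalently, $\|q\xi_x\|=1$ while $\xi_x\to0$ weakly; the weak convergence holds because the supports are pairwise disjoint.

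The only point needing care is the ghost estimate. It relies on the fiber sizes $M_n$ tending to infinity, which is guaranteed by the choice $M_0=n$ in \eqref{eq:bundle-parameters}. A bounded fiber size would leave $q$ non-ghost. Everything else is immediate.
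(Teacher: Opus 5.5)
Your proposal is correct and follows the paper's proof essentially verbatim: the same explicit computation of the coefficients of $q$ (equal to $M_n^{-1}$ within a fiber, zero otherwise), the same use of $M_n\geq M_0=n$ from \eqref{eq:bundle-parameters} with $F$ a finite union of initial components, and the same infinite-rank argument via the orthonormal family $(\xi_x)_{x\in X}$. You also cite Lemma~\ref{lem:q-in-roe} for membership of $q$ in $\Cu(Y)$, which the paper leaves implicit.
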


\begin{proof}
By \eqref{eq:isometry-U} and \eqref{eq:q-definition}, for $y,z\in Y_n$ the matrix coefficient of $q$ is
\[
\langle q\delta_z,\delta_y\rangle=
\begin{cases}
M_n^{-1},& y,z\text{ lie in the same fiber},\\
0,&\text{otherwise}.
\end{cases}
\]
Given $\varepsilon>0$, choose $N\in\mathbb N$ such that
$N^{-1}<\varepsilon$, and put $F=\bigsqcup_{n<N}Y_n$.  If $y,z\notin F$, then the displayed coefficient is either zero or, for some $n\geq N$, has absolute value
\[
M_n^{-1}\leq n^{-1}\leq N^{-1}<\varepsilon,
\]
where the first inequality follows from \eqref{eq:bundle-parameters}.
Hence $q$ is a ghost.  Its range contains the orthonormal family $(\xi_x)_{x\in X}$, so it has infinite rank and is not compact.
\end{proof}

Proposition~\ref{prop:ghost} explains the boundary of the counterexample.
Since $Y$ itself is sparse and $q$ is a noncompact ghost projection, $Y$
fails the sparse compact-ghost hypothesis of
\cite[Theorem~1.4(i)]{BFV20}. Moreover, by
\cite[p.~1675, Theorem~1.3]{RW14}, it cannot have property~A. Thus the
additional assumptions in the positive hereditary-embedding theorems are not
formal artifacts: without them, even an explicit spatial corner isomorphism
does not force a coarse embedding.

\section{Compact ghosts and injective rigidity}\label{sec:positive}

We now turn to the complementary positive result.  Throughout this section, $X$ and $Y$ are uniformly locally finite metric spaces and
\[
\Phi\colon\Cu(X)\longrightarrow\Cu(Y)
\]
is an injective $*$-homomorphism whose range is hereditary.  We first show that such a homomorphism has no singular summand: it is implemented by an isometry onto the support of its range.  This spatial implementation is the metric-space version of \cite[Lemma~6.1]{BFV20}; we include the argument because the support projection will be used explicitly.  We then isolate the compact-ghost input and finally obtain injectivity from a uniform Hall condition.

The case $X=\varnothing$ is trivial, so we assume that $X$ is nonempty; injectivity of $\Phi$ then implies that $Y$ is nonempty.  We shall use without further comment that every nonempty uniformly locally finite metric space $M$ is countable: if $m_0\in M$, then
\[
M=\bigcup_{n\geq1}B_M(m_0,n),
\]
and every ball in this union is finite.

\subsection{Spatial implementation}

For $x,z\in X$, let $e_{xz}$ denote the standard matrix unit on $\ell^2(X)$.  We identify a subset $A\subseteq X$ with the diagonal projection
\[
\chi_A=\sum_{x\in A}e_{xx}\in\ell^\infty(X)\subseteq\Cu(X),
\]
where the sum converges in the strong operator topology.

\begin{lem}\label{lem:spatial-implementation}
There exist an isometry
\[
U\colon\ell^2(X)\longrightarrow\ell^2(Y)
\]
and a projection $q\in\Cu(Y)$ such that
\[
UU^*=q,\qquad
\Phi(a)=UaU^*\quad(a\in\Cu(X)),
\]
and
\[
\Phi(\Cu(X))=q\Cu(Y)q.
\]
\end{lem}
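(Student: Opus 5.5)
The plan is to build $U$ from the images of the matrix units. Put $p_x=\Phi(e_{xx})$ for $x\in X$. These are mutually orthogonal projections in $\Cu(Y)$, and each is nonzero because $\Phi$ is injective. The partial isometries $\Phi(e_{xz})$ form a system of matrix units with $\Phi(e_{xz})^*\Phi(e_{xz})=p_z$. Fix a base point $x_0\in X$ and a unit vector $\eta\in p_{x_0}\ell^2(Y)$. Define
\[
U\delta_x=\Phi(e_{xx_0})\eta .
\]
Orthogonality of the $p_x$ makes $U$ an isometry, and a matrix-unit computation gives $\Phi(e_{xz})=Ue_{xz}U^*$ on the range of $U$. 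The point to prove is that $p_{x_0}$ has rank one, so that $\Phi(e_{xz})=Ue_{xz}U^*$ holds exactly.

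\textbf{Rank one from heredity.} Since $p_{x_0}\in\Phi(\Cu(X))$, heredity gives
\[
p_{x_0}\Cu(Y)p_{x_0}\subseteq \Phi(\Cu(X)).
\]
Combining this with $p_{x_0}=\Phi(e_{x_0x_0})$ and injectivity,
\[
p_{x_0}\Cu(Y)p_{x_0}=\Phi(e_{x_0x_0}\Cu(X)e_{x_0x_0})=\mathbb C\,p_{x_0}.
\]
If $p_{x_0}$ had rank at least $2$, choose two orthonormal vectors $u,v$ in its range. The rank-one operator $u\otimes v^*$ is compact, so it lies in $\Cu(Y)$ by \eqref{eq:compacts-in-roe}, and it is fixed by compression with $p_{x_0}$. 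It is not a scalar multiple of $p_{x_0}$, a contradiction. Hence every $p_x$ is rank one, $U$ is a unitary from $\ell^2(X)$ onto $\sum_x p_x\ell^2(Y)$, and
\[
\Phi(a)=UaU^*\quad\text{for all } a\in\cK(\ell^2(X)).
\]

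\textbf{Extending to all of $\Cu(X)$.} Put $q=UU^*=\sum_x p_x$, with the sum taken in the strong topology. For general $a\in\Cu(X)$ and finite sets $F,G\subseteq X$,
\[
\Phi(\chi_F a\chi_G)=\Phi(\chi_F)\Phi(a)\Phi(\chi_G)=U\chi_F a\chi_G U^*.
\]
Hence $\Phi(a)$ and $UaU^*$ have the same compressions $p_x\Phi(a)p_z$. It remains to show that $\Phi(a)=q\Phi(a)q$, i.e.\ that $\Phi(a)$ vanishes off $q$. I would write $r$ for the support projection of $\Phi(\Cu(X))$ in $B(\ell^2(Y))$. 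By nondegeneracy of the compacts in $\Cu(X)$, $\Phi(\cK(\ell^2(X)))$ is an essential ideal in the image, and an approximate unit $\chi_{F_n}$ of $\cK(\ell^2(X))$ satisfies
\[
\Phi(a)\Phi(\chi_{F_n})\to\Phi(a)\quad\text{strictly}.
\]
The concrete route: for $\xi\perp q\ell^2(Y)$, take $b=\Phi(a)^*\Phi(a)$. Then $c=\Phi(1)-q$ is a projection in $B(\ell^2(Y))$ orthogonal to every $p_x$. I would show $c\in\Cu(Y)$ fails to be killed unless $c=0$: the element $\Phi(1)$ lies in the hereditary image, so everything below $\Phi(1)$ in $\Cu(Y)$ lies in the image. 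If $c\neq0$, produce a rank-one projection $e\le\Phi(1)$ with $e\,q=0$. It lies in $\Cu(Y)$ (compact), hence $e=\Phi(a_e)$ for some $a_e$. Then $a_e e_{xx}=\Phi^{-1}(e p_x)=0$ for all $x$, which forces $a_e=0$, a contradiction. So $\Phi(1)=q$, and this gives $q\in\Cu(Y)$ and $\Phi(a)=q\Phi(a)q=UaU^*$.

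\textbf{Surjectivity and the main obstacle.} Finally, $q\Cu(Y)q\subseteq\Phi(\Cu(X))$ follows from heredity with $q=\Phi(1)$, and the reverse inclusion is immediate. The main obstacle is the step $\Phi(1)=q$, i.e.\ excluding a singular summand. It hinges on producing the rank-one $e$ under $\Phi(1)$ but orthogonal to $q$: take $e$ to be the projection onto a unit vector in $(\Phi(1)-q)\ell^2(Y)$. Then $e\le\Phi(1)$ holds because that vector lies in the range of $\Phi(1)$.
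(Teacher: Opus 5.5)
Your proof is correct and follows the paper's structure: rank one of each $p_x$ from heredity and the one-dimensionality of $p_x\Cu(Y)p_x$, the orthonormal family $\Phi(e_{xx_0})\eta$ defining $U$, and a heredity argument excluding a singular summand. The only variation is that you prove $\Phi(1)=UU^*$ by placing a rank-one projection $e\le\Phi(1)-UU^*$ in the image and observing that $\Phi^{-1}(e)$ kills every $e_{xx}$, whereas the paper deduces $\Phi(\cK(\ell^2(X)))=\cK(\Phi(1)\ell^2(Y))$ from simplicity of the compacts; you should also delete the unused exploratory fragments in your third paragraph (support projection, strict convergence, ``$c\in\Cu(Y)$'', which is not known at that stage).
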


\begin{proof}
Put $B=\Phi(\Cu(X))$ and $q=\Phi(1)$.  The projection $q$ is the unit of $B$.  Since $B$ is hereditary, the observation following Definition~\ref{def:hereditary} gives
\[
B=q\Cu(Y)q.
\]

For $x\in X$, set $p_x=\Phi(e_{xx})$.  The projection $p_x$ is minimal in $B$.  Moreover,
\[
p_x\Cu(Y)p_x\subseteq q\Cu(Y)q=B.
\]
If $p_x$ had rank greater than one, it would dominate a nonzero proper rank-one subprojection $r<p_x$.  Since
\[
r\in\cK(\ell^2(Y))\subseteq\Cu(Y)
\quad\text{and}\quad 0\leq r\leq p_x,
\]
heredity would imply $r\in B$, contradicting the minimality of $p_x$.  Hence every $p_x$ has rank one.

Fix $x_0\in X$ and choose a unit vector $\xi_{x_0}\in\operatorname{ran}(p_{x_0})$.  For each $x\in X$, set
\[
\xi_x=\Phi(e_{x x_0})\xi_{x_0}.
\]
The matrix-unit relations imply that $(\xi_x)_{x\in X}$ is orthonormal and that
\begin{equation}\label{eq:matrix-units-spatial}
p_x=|\xi_x\rangle\langle\xi_x|,
\qquad
\Phi(e_{xz})=|\xi_x\rangle\langle\xi_z|.
\end{equation}
Thus $U\delta_x=\xi_x$ defines an isometry.  Let $r=UU^*$ and
\[
J=\Phi(\cK(\ell^2(X))).
\]
Equation~\eqref{eq:matrix-units-spatial} gives
\begin{equation}\label{eq:J-first-description}
J=\cK(r\ell^2(Y))\subseteq\cK(q\ell^2(Y)).
\end{equation}
On the other hand, $J$ is a nonzero closed ideal of $B$, while
\[
\cK(q\ell^2(Y))=q\cK(\ell^2(Y))q\subseteq B.
\]
Since $J$ is an ideal of $B$ and $\cK(q\ell^2(Y))\subseteq B$, equation~\eqref{eq:J-first-description} shows that $J$ is a nonzero closed two-sided ideal of $\cK(q\ell^2(Y))$.  Since $\cK(q\ell^2(Y))$ is simple, it follows that
\begin{equation}\label{eq:J-second-description}
J=\cK(q\ell^2(Y)).
\end{equation}
Comparing the support projections in \eqref{eq:J-first-description} and \eqref{eq:J-second-description}, or equivalently the strong limits of approximate units, gives $r=q$.

Finally, for $a\in\Cu(X)$ and $x,z\in X$,
\[
\begin{aligned}
p_x\Phi(a)p_z
&=\Phi(e_{xx}ae_{zz})\\
&=\langle a\delta_z,\delta_x\rangle
  |\xi_x\rangle\langle\xi_z|\\
&=p_xUaU^*p_z.
\end{aligned}
\]
The vectors $(\xi_x)_{x\in X}$ form an orthonormal basis of $q\ell^2(Y)$, and both operators are supported on that subspace.  Therefore $\Phi(a)=UaU^*$.
\end{proof}

The next lemma is the precise point at which the sparse compact-ghost hypothesis enters.  It is the hereditary-isometry analogue of the coefficient estimate in \cite[Lemma~5.1]{BFV20}; compare also the block-rank-one obstruction in \cite[Proposition~5.5]{BFV24}.  Write
\[
\|\xi\|_\infty=\sup_{y\in Y}|\langle\xi,\delta_y\rangle|
\qquad(\xi\in\ell^2(Y)).
\]

\begin{lem}\label{lem:uniform-coefficient}
Assume that every sparse subspace of $Y$ yields only compact ghost projections.  With $U$ as in Lemma~\ref{lem:spatial-implementation}, one has
\[
\inf_{x\in X}\|U\delta_x\|_\infty>0.
\]
\end{lem}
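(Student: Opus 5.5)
We argue by contradiction: if the infimum is zero, we will build a noncompact ghost projection supported on a sparse subspace of $Y$. Suppose there are points $x_1,x_2,\ldots\in X$ with $\|\xi_{x_k}\|_\infty\to0$, where $\xi_x=U\delta_x$. Since each $\xi_x$ is a unit vector in $\ell^2(Y)$, there is a finite set $G_x\subseteq Y$ with $\|\chi_{Y\setminus G_x}\xi_x\|$ as small as we like. We also need the converse localisation. For each finite $E\subseteq Y$, the operator $\chi_E\in\Cu(Y)$ is compact, so $q\chi_Eq=\Phi(a_E)$ for some compact $a_E\in\cK(\ell^2(X))$, using Lemma~\ref{lem:spatial-implementation} and $J=\cK(q\ell^2(Y))$. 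Hence $\sum_x\|\chi_E\xi_x\|^2<\infty$, so $\|\chi_E\xi_x\|\to0$ as $x\to\infty$ in $X$. Also $\|\xi_x\|_\infty\to0$ forces $x_k\to\infty$, because $X$ is countable and each individual $\xi_x$ has positive sup-norm.

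\textbf{Step 1 (sparse extraction).} Using these two facts, we choose inductively a subsequence, still written $x_k$, and finite sets $G_k\subseteq Y$ such that:
\begin{enumerate}[label=\textup{(\alph*)}]
\item $\|\chi_{Y\setminus G_k}\xi_{x_k}\|<2^{-k}\varepsilon$;
\item $d_Y(G_k,G_j)>k$ for $j<k$;
\item $d_X(x_k,x_j)\to\infty$.
\end{enumerate}
For (b), we pick $x_k$ far enough out that $\xi_{x_k}$ has small mass on the $k$-neighbourhood of $G_1\cup\cdots\cup G_{k-1}$, and remove that neighbourhood from $G_k$. Then $Z=\bigsqcup_k G_k$ is sparse. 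Let $\eta_k=\chi_{G_k}\xi_{x_k}/\|\chi_{G_k}\xi_{x_k}\|$. These vectors have disjoint supports, satisfy $\|\eta_k\|_\infty\to0$, and $\|\eta_k-\xi_{x_k}\|$ is small and summable.

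\textbf{Step 2 (the candidate ghost).} Let $A=\{x_k\}$, a sparse subset of $X$, so $\chi_A\in\Cu(X)$. Then $p=\Phi(\chi_A)=\sum_k|\xi_{x_k}\rangle\langle\xi_{x_k}|$ lies in $\Cu(Y)$. Let $p'=\sum_k|\eta_k\rangle\langle\eta_k|$, the projection onto $\overline{\operatorname{span}}\{\eta_k\}$. It acts on $\ell^2(Z)$ and is block diagonal over the $G_k$. We want $p'\in\Cu(Z)$. Since $\sum_k\|\eta_k-\xi_{x_k}\|<\varepsilon'$, we get $\|p-p'\|\le2\varepsilon'$, using orthonormality of the $\xi_{x_k}$. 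We would rather obtain exact membership, as follows. The compression $\chi_Zp\chi_Z$ lies in $\Cu(Z)$ and is close to $p'$. Its block-diagonal part $\sum_k\chi_{G_k}p\chi_{G_k}$ also lies in $\Cu(Z)$, because block-diagonal cutdowns over a sparse partition preserve $\Cu(Z)$: the off-diagonal finite-propagation part is eventually zero. Each block $\chi_{G_k}p\chi_{G_k}$ is within $O(2^{-k})$ of $|\eta_k\rangle\langle\eta_k|$. So the block-diagonal operator is a norm perturbation of $p'$ by $\bigoplus$ blocks tending to zero, and this perturbation is compact, hence in $\Cu(Z)$ by~\eqref{eq:compacts-in-roe}. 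Therefore $p'\in\Cu(Z)$.

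\textbf{Step 3 (ghost and noncompact).} The matrix entries of $p'$ inside block $G_k$ are $\eta_k(y)\overline{\eta_k(z)}$, bounded by $\|\eta_k\|_\infty^2\to0$, and entries between different blocks vanish. So $p'$ is a ghost projection on the sparse space $Z$. It has infinite rank, which contradicts the hypothesis. The main obstacle is Step 2: we need an honest projection in $\Cu(Z)$, not merely an approximate one. The point to verify carefully is the claim that a block-diagonal perturbation whose block norms tend to zero is compact, together with the sparse block-diagonal cutdown argument. If that becomes delicate, the fallback is functional calculus: apply a continuous $h$ with $h=0$ near $0$ and $h=1$ near $1$ to the positive block-diagonal operator $\sum_k\chi_{G_k}p\chi_{G_k}\in\Cu(Z)$. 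Its spectrum clusters near $\{0,1\}$ outside a finite set, so the result is a projection in $\Cu(Z)$ that is still a ghost and still has infinite rank.
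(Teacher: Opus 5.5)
Your proposal is correct and follows the paper's strategy. Both arguments extract a sparse sequence of almost disjointly supported vectors using weak nullness of the orthonormal family $(U\delta_x)$, truncate and normalize them, show that the projection onto the normalized truncations is a compact perturbation of a compression of $\Phi(\chi_A)$ (hence lies in $\Cu(Z)$), and observe that it is a noncompact ghost.

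The only real difference is how you get exact membership in Step~2, and one estimate there is misstated. With $u_k=\chi_{G_k}\xi_{x_k}$, the block $\chi_{G_k}p\chi_{G_k}$ equals $\|u_k\|^2|\eta_k\rangle\langle\eta_k|$ plus the cross terms $\sum_{j\neq k}|\chi_{G_k}\xi_{x_j}\rangle\langle\chi_{G_k}\xi_{x_j}|$. Your construction does not make these cross terms $O(2^{-k})$: the tail of an early $\xi_{x_j}$ may be spread slowly over the later $G_k$. What you do get, from disjointness of the $G_k$, is $\sum_k\sum_{j\neq k}\|\chi_{G_k}\xi_{x_j}\|^2\leq\sum_j\|\chi_{Y\setminus G_j}\xi_{x_j}\|^2<\infty$. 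So the block errors are summable and tend to zero, which is all that compactness of the block-diagonal perturbation needs.

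The block-diagonal detour is in fact unnecessary. You chose $\ell^1$-summable errors, so your own bound $\|\,|\xi_{x_k}\rangle\langle\xi_{x_k}|-|\eta_k\rangle\langle\eta_k|\,\|\leq 2\|\xi_{x_k}-\eta_k\|$ already shows that $p-p'$ is a norm-convergent series of finite-rank operators, hence compact. Then $p'\in\Cu(Y)$, and since $p'$ lives on $\ell^2(Z)$, its restriction lies in $\Cu(Z)$ by compression. This is the $\ell^1$ counterpart of the paper's argument. The paper assumes only $\sum\varepsilon_n^2<\infty$, so it instead uses that $V-W$ is Hilbert--Schmidt, via $a-Q=(V-W)V^*+W(V-W)^*$.

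Minor points:
\begin{itemize}
\item $\|\chi_E\xi_x\|\to 0$ is just weak nullness (or Bessel's inequality), so you do not need to pass through $J$.
\item Condition (c) and the sparseness of $A$ play no role, since $\chi_A\in\ell^\infty(X)\subseteq\Cu(X)$ for every $A\subseteq X$.
\end{itemize}
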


\begin{proof}
Suppose otherwise. If $X$ is finite, then each
$\|U\delta_x\|_\infty$ is positive, and the minimum of these finitely many
numbers is positive. Thus $X$ is infinite, and we may choose distinct points
$x_n\in X$ such that, for
\[
\xi_n=U\delta_{x_n},
\]
one has $\|\xi_n\|_\infty\to0$.  The sequence $(\xi_n)$ is orthonormal and hence converges weakly to zero.  Fix positive numbers $\varepsilon_n<1/2$ satisfying
\begin{equation}\label{eq:square-summable-errors}
\sum_n\varepsilon_n^2<\infty.
\end{equation}

After passing to a subsequence and relabeling, we may choose finite nonempty
sets $Z_n\subseteq Y$ such that
\begin{equation}\label{eq:sparse-localization}
	d\left(Z_n,\bigcup_{j<n}Z_j\right)>n,
	\qquad
	\|\xi_n-\chi_{Z_n}\xi_n\|<\varepsilon_n.
\end{equation}
Indeed, after $Z_1,\ldots,Z_{n-1}$ have been chosen, the $n$-neighborhood $E_n$ of their union is finite.  Weak convergence gives $\|\chi_{E_n}\xi_k\|\to0$ as $k\to\infty$, so we may choose the next vector with $\|\chi_{E_n}\xi_n\|<\varepsilon_n/2$.  We then choose a finite nonempty set $Z_n\subseteq Y\setminus E_n$ such that $\|\xi_n-\chi_{E_n\cup Z_n}\xi_n\|<\varepsilon_n/2$.  Since the two discarded parts are orthogonal,
\[
\|\xi_n-\chi_{Z_n}\xi_n\|^2
=
\|\chi_{E_n}\xi_n\|^2
+
\|\xi_n-\chi_{E_n\cup Z_n}\xi_n\|^2
<\frac{\varepsilon_n^2}{2}
<\varepsilon_n^2.
\]
Thus \eqref{eq:sparse-localization} holds.  It follows that
\[
Z=\bigsqcup_n Z_n
\]
is a sparse subspace of $Y$.

Let $A=\{x_n:n\in\mathbb N\}$ and put
\[
P=\Phi(\chi_A)=\sum_n|\xi_n\rangle\langle\xi_n|\in\Cu(Y).
\]
After identifying $\ell^2(Z)$ with its canonical subspace of $\ell^2(Y)$, the compression
\[
a=\chi_ZP\chi_Z\big|_{\ell^2(Z)}
\]
belongs to $\Cu(Z)$: compressing finite-propagation approximants for $P$ preserves their propagation.

Set
\[
u_n=\chi_{Z_n}\xi_n,
\qquad
\eta_n=\frac{u_n}{\|u_n\|}.
\]
These vectors are well defined because \eqref{eq:sparse-localization} gives $\|u_n\|\geq1-\varepsilon_n>1/2$.  Define operators $V,W\colon\ell^2(\mathbb N)\to\ell^2(Z)$ by
\[
V\delta_n=\chi_Z\xi_n,
\qquad
W\delta_n=\eta_n.
\]
The vectors $\eta_n$ have disjoint supports, so $W$ is an isometry.  Moreover,
\[
\begin{aligned}
\|V\delta_n-W\delta_n\|
&\leq \|\chi_Z\xi_n-u_n\|+\|u_n-\eta_n\|\\
&\leq 2\|\xi_n-u_n\|<2\varepsilon_n.
\end{aligned}
\]
By \eqref{eq:square-summable-errors}, $V-W$ is Hilbert--Schmidt.  Since
\[
a=VV^*,
\qquad
Q=WW^*=\sum_n|\eta_n\rangle\langle\eta_n|,
\]
we obtain
\[
a-Q=(V-W)V^*+W(V-W)^*\in\cK(\ell^2(Z)).
\]
Thus $Q\in\Cu(Z)$.

The projection $Q$ has infinite rank.  Moreover,
\[
\|\eta_n\|_\infty
\leq \frac{\|\xi_n\|_\infty}{\|u_n\|}
\longrightarrow0.
\]
To verify the ghost condition, given $\varepsilon>0$, choose
$N\in\mathbb N$ such that $\|\eta_n\|_\infty^2<\varepsilon$ for every
$n\geq N$, and put $F=\bigsqcup_{n<N}Z_n$.  If $z,w\notin F$, then either $z$ and $w$ lie in different sets $Z_n$, in which case $\langle Q\delta_z,\delta_w\rangle=0$, or they lie in the same $Z_n$ with $n\geq N$, in which case its absolute value is at most $\|\eta_n\|_\infty^2<\varepsilon$.  Hence $Q$ is a noncompact ghost projection in $\Cu(Z)$, contradicting the hypothesis.
\end{proof}

\subsection{Uniform propagation of subseries}

The following Baire-category lemma converts pointwise membership in a uniform Roe algebra into a common propagation bound.  It is a general additive-family variant of the projection-subseries uniformization in \cite[Lemma~2.3]{BFV24}, whose source is \cite[Lemma~4.9]{BF21}.  Its uniform propagation conclusion will be used twice, first to recover a coarse map and then to establish Hall's inequalities.

\begin{lem}\label{lem:uniform-subseries}
Let $I$ be a countable set and $M$ a uniformly locally finite metric space.  Suppose that operators $T_A\in\Cu(M)$, indexed by $A\subseteq I$, satisfy the following conditions:
\begin{enumerate}[label=\textup{(\roman*)}]
\item $T_{A\sqcup C}=T_A+T_C$ whenever $A\cap C=\varnothing$;
\item the map $A\mapsto T_A$ from $2^I$ with the product topology to $B(\ell^2(M))$ with the strong operator topology is continuous;
\item $\sup_{A\subseteq I}\|T_A\|<\infty$.
\end{enumerate}
Then, for every $\varepsilon>0$, there is $R<\infty$ such that for every $A\subseteq I$ there is an operator $S_A$ with
\[
\operatorname{prop}(S_A)\leq R,
\qquad
\|T_A-S_A\|<\varepsilon.
\]
\end{lem}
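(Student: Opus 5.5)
The plan is the standard Baire-category argument behind \cite[Lemma~4.9]{BF21}. The space $2^I$ with the product topology is compact metrizable, hence a Baire space. For $\varepsilon>0$ and $R\in\mathbb N$, set
\[
\mathcal D_R=\bigl\{A\subseteq I:\ \exists\, S,\ \operatorname{prop}(S)\leq R,\ \|T_A-S\|\leq\varepsilon/4\bigr\}.
\]
Since every $T_A$ lies in $\Cu(M)$, we have $2^I=\bigcup_R\mathcal D_R$.

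The first step is to show that each $\mathcal D_R$ is closed. Suppose $A_k\to A$ and $S_k$ witness $A_k\in\mathcal D_R$. The $S_k$ are bounded, by (iii) together with $\|S_k\|\leq\|T_{A_k}\|+\varepsilon/4$, so some subnet converges in the weak operator topology to an operator $S$. Propagation at most $R$ is WOT-closed, because it is a condition on individual matrix entries, so $\operatorname{prop}(S)\le R$. By (ii), $T_{A_k}\to T_A$ strongly, hence weakly. Since the norm is WOT-lower semicontinuous, $\|T_A-S\|\leq\liminf\|T_{A_k}-S_k\|\leq\varepsilon/4$. Thus $A\in\mathcal D_R$.

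By Baire, some $\mathcal D_{R_0}$ contains a basic open set. That set is determined by a finite set $F\subseteq I$ and a fixed $B\subseteq F$: it consists of all $A$ with $A\cap F=B$.

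The second step passes from this open set to all of $2^I$ using additivity (i). For arbitrary $A\subseteq I$, write
\[
T_{A}=T_{(A\setminus F)\sqcup B}-T_{B}+T_{A\cap F}.
\]
The first term is within $\varepsilon/4$ of an operator of propagation at most $R_0$. Both $B$ and $A\cap F$ belong to the basic open set or can be handled directly. For the direct route, note that the finitely many operators $T_C$ with $C\subseteq F$ are fixed elements of $\Cu(M)$. Each is within $\varepsilon/4$ of some finite-propagation operator, and since there are only finitely many, a single radius $R_1$ works for all of them. This gives an operator $S_A$ of propagation at most $\max\{R_0,R_1\}$ with $\|T_A-S_A\|\leq 3\varepsilon/4<\varepsilon$.

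The main obstacle is the closedness step: the approximants $S_k$ must have a limit that is controlled both in propagation and in norm. The fix is to work with WOT limits, which preserve propagation bounds entrywise, and to use lower semicontinuity of the norm. For this, (ii) is needed to guarantee that $T_{A_k}$ converges to $T_A$ at least weakly. Countability of $I$ is used only to make $2^I$ metrizable, so that sequences suffice for closedness. Uniform local finiteness of $M$ is not essential here.
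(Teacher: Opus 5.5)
Your proposal is correct and follows the paper's proof essentially step for step. Both arguments use the same sets $D_R$; you build the attained approximant into the definition, whereas the paper first proves by WOT compactness that the distance to $\mathcal C_R$ is attained. Both then show closedness via WOT compactness and lower semicontinuity of the norm, apply Baire on $2^I$ to get a cylinder $A\cap F=B$, and use additivity plus a common radius for the finitely many $T_C$, $C\subseteq F$, to reach a $3\varepsilon/4$ bound.
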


\begin{proof}
Put $\alpha=\varepsilon/4$.  For $R\in\mathbb N$, let
\[
\mathcal C_R=\{S\in B(\ell^2(M)): \operatorname{prop}(S)\leq R\}
\]
and
\[
D_R=\{A\subseteq I:\operatorname{dist}(T_A,\mathcal C_R)\leq\alpha\}.
\]
The linear space $\mathcal C_R$ is weak-operator closed, since it is
defined by the vanishing of the matrix coefficients indexed by pairs at
distance greater than $R$. Moreover, for every
$T\in B(\ell^2(M))$, there exists $S\in\mathcal C_R$ such that
\[
\|T-S\|=\operatorname{dist}(T,\mathcal C_R).
\]
Indeed, let $(S_i)$ be a norm-minimizing net in $\mathcal C_R$. This net is
norm bounded, and hence it has a weak-operator convergent subnet. Its limit
belongs to $\mathcal C_R$ because $\mathcal C_R$ is weak-operator closed.
Since the operator norm is weak-operator lower semicontinuous, the limit
attains the distance from $T$ to $\mathcal C_R$.

We claim that $D_R$ is closed in $2^I$.  Let $A_i\to A$ with $A_i\in D_R$.  Choose $S_i\in\mathcal C_R$ such that
\[
\|T_{A_i}-S_i\|\leq\alpha+\frac{1}{i}.
\]
By condition~\textup{(iii)} and the preceding estimate, the family $(S_i)$
lies in a fixed norm-bounded ball of $B(\ell^2(M))$. Such a ball is compact
in the weak operator topology, so, after passing to a subnet, we may write
$S_{i_\lambda}\to S$ in the weak operator topology. Along the same subnet,
strong continuity gives $T_{A_{i_\lambda}}\to T_A$ strongly, hence weakly.
Since $\mathcal C_R$ is weak-operator closed, we have $S\in\mathcal C_R$.
Moreover, the operator norm is weak-operator lower semicontinuous, and hence
\[
\|T_A-S\|
\leq \liminf_\lambda
\|T_{A_{i_\lambda}}-S_{i_\lambda}\|
\leq \alpha.
\]
Thus $A\in D_R$.

Every $T_A$ belongs to $\Cu(M)$, so
\[
2^I=\bigcup_{R\in\mathbb N}D_R.
\]
Since $2^I$ is a compact metrizable Baire space, some $D_R$ has nonempty interior.  Hence there exist a finite set $K\subseteq I$ and a subset $C\subseteq K$ such that
\begin{equation}\label{eq:cylinder-in-DR}
A\cap K=C\quad\Longrightarrow\quad A\in D_R.
\end{equation}
If $E\subseteq I\setminus K$, both $C$ and $C\sqcup E$ satisfy \eqref{eq:cylinder-in-DR}, and
\[
T_E=T_{C\sqcup E}-T_C.
\]
It follows that every such $T_E$ is within $2\alpha$ of an operator in $\mathcal C_R$.

There are only finitely many subsets $J\subseteq K$.  Choose $R_0$ so that each $T_J$ is within $\alpha$ of an operator in $\mathcal C_{R_0}$.  For arbitrary $A\subseteq I$, write
\[
A=(A\setminus K)\sqcup(A\cap K).
\]
Combining the two approximations gives an operator of propagation at most $\max\{R,R_0\}$ whose distance from $T_A$ is at most $3\alpha<\varepsilon$.
\end{proof}

For later use, recall that every bounded-distance relation on a uniformly
locally finite metric space is a finite union of graphs of partial
bijections; compare \cite[Lemma~2.3]{Zha26}. Indeed, for
\[
E_R=\{(x,z):d(x,z)\leq R\},
\]
regard $E_R$ as a bipartite graph on two copies of the space. If
\[
b_R=\sup_x|B(x,R)|,
\]
then this graph has maximum degree at most $b_R$. Every finite subgraph therefore admits a proper edge coloring with at most
$b_R$ colors by K\H{o}nig's line-coloring theorem \cite{Kon16}.
A compactness argument then gives such a coloring of the whole graph.
Each color class is a matching and hence the graph of a partial bijection.

\subsection{Coarse control and Hall's inequalities}

The coefficient bound from Lemma~\ref{lem:uniform-coefficient} now produces a coarse embedding.  The two control estimates parallel \cite[Lemmas~5.3 and~6.2]{BFV20}; here they follow directly from the implementing isometry and Lemma~\ref{lem:uniform-subseries}.  We include the argument because the same uniformization mechanism will subsequently force injectivity.

\begin{prop}\label{prop:coefficient-selector-coarse}
Let $U$ be as in Lemma~\ref{lem:spatial-implementation}.  Suppose that a map $f\colon X\to Y$ and a number $\delta>0$ satisfy
\[
|\langle U\delta_x,\delta_{f(x)}\rangle|\geq\delta
\qquad(x\in X).
\]
Then $f$ is a coarse embedding.
\end{prop}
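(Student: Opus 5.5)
We verify the two halves of the coarse-embedding definition separately: the upper control estimate (expansion) and the lower control estimate (effective properness). Both come from applying Lemma~\ref{lem:uniform-subseries} to families of operators of the form $\Phi(\cdot)=U(\cdot)U^*$ built from partial translations, and then reading off the coefficient bound at the selected points $f(x)$.

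\emph{Expansion.} Fix $r\ge0$. By the edge-colouring remark preceding this subsection, $E_r=\{(x,z):d(x,z)\le r\}$ is a finite union of graphs of partial bijections $\sigma_1,\dots,\sigma_k$. Fix one $\sigma=\sigma_i$ with domain $D$, let $I=D$, and for $A\subseteq D$ set
\[
T_A=\Phi\Bigl(\sum_{x\in A}e_{\sigma(x)x}\Bigr)=\sum_{x\in A}|\xi_{\sigma(x)}\rangle\langle\xi_x|,
\]
where $\xi_x=U\delta_x$. Additivity is clear, the norms are at most $1$, and strong continuity in $A$ holds because the $\xi_x$ are orthonormal. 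Apply Lemma~\ref{lem:uniform-subseries} with $\varepsilon=\delta^2/2$ to get $R$ and operators $S_A$ of propagation at most $R$. For a single pair, take $A=\{x\}$ and compute
\[
|\langle T_{\{x\}}\delta_{f(x)},\delta_{f(\sigma(x))}\rangle|=|\langle\xi_{\sigma(x)},\delta_{f(\sigma(x))}\rangle|\,|\langle\delta_{f(x)},\xi_x\rangle|\ge\delta^2.
\]
Hence the corresponding entry of $S_{\{x\}}$ exceeds $\delta^2/2>0$, so $d(f(x),f(\sigma(x)))\le R$. Taking the maximum of these $R$ over the $k$ partial bijections bounds $d(f(x),f(z))$ uniformly whenever $d(x,z)\le r$. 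Here Lemma~\ref{lem:uniform-subseries} is overkill: a single operator $\Phi(\sum_{x\in D}e_{\sigma(x)x})$ approximated by finite propagation already suffices, since approximating it to within $\delta^2/2$ bounds all the relevant entries at once.

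\emph{Effective properness.} This is the main obstacle. We must show that for each $s$ there is a bound $t$ such that $d(f(x),f(z))\le s$ implies $d(x,z)\le t$. Suppose not. Then there are pairs $(x_n,z_n)$ with $d(f(x_n),f(z_n))\le s$ but $d(x_n,z_n)\to\infty$. Using uniform local finiteness of $Y$ and the bound on fibres, we pass to a subsequence in which all the points are distinct and the map $x_n\mapsto z_n$ is a partial bijection. The natural tool now is $\Phi^{-1}$ applied to an element of $q\Cu(Y)q$ that carries a large $(x_n,z_n)$ entry. Choose a partial bijection $\tau$ on $Y$ with propagation at most $s$ and $\tau(f(x_n))=f(z_n)$, and form
\[
b=q\,v_\tau\,q\in q\Cu(Y)q=\Phi(\Cu(X)).
\]
Then
\[
\langle\Phi^{-1}(b)\delta_{x_n},\delta_{z_n}\rangle=\langle v_\tau\xi_{x_n},\xi_{z_n}\rangle,
\]
which contains the term $\overline{\langle\xi_{z_n},\delta_{f(z_n)}\rangle}\langle\xi_{x_n},\delta_{f(x_n)}\rangle$ of modulus at least $\delta^2$, plus cross terms that could cancel it. To control those, we multiply $v_\tau$ by phases $\pm1$ on the points $f(x_n)$. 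A Rademacher averaging argument (random signs $\epsilon_n$) shows that for some choice of signs the entry stays at least $\delta^2$ for infinitely many $n$, since the main term survives averaging while cross terms average out. Given $a=\Phi^{-1}(b)\in\Cu(X)$, such a sequence of entries of modulus at least $\delta^2$ at pairs with $d(x_n,z_n)\to\infty$ contradicts finite-propagation approximability of $a$. If the sign choice must vary with $n$, we instead take subsets $A$ of the sequence and apply Lemma~\ref{lem:uniform-subseries} to $A\mapsto\Phi^{-1}(q\,v_{\tau,A}\,q)$, obtaining a common radius that all these entries must respect, again a contradiction. Making this uniformity step rigorous is the delicate part of the proof.
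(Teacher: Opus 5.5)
Your expansion half is the paper's argument: cover the $r$-relation on $X$ by finitely many partial bijections, apply Lemma~\ref{lem:uniform-subseries} with error $\delta^2/2$, and read off the coefficient of the singleton operators. Drop the closing remark that the lemma is ``overkill'', though; it is wrong. The $(f(\sigma(x)),f(x))$-coefficient of the single operator $\Phi\bigl(\sum_{x'\in D}e_{\sigma(x')x'}\bigr)$ is $\sum_{x'\in D}\langle\delta_{f(x)},\xi_{x'}\rangle\langle\xi_{\sigma(x')},\delta_{f(\sigma(x))}\rangle$. Bessel's inequality bounds the terms with $x'\neq x$ only by $1-\delta^2$, so nothing stops them from cancelling the main term when $\delta^2<1/2$. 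This is the same cross-term problem you meet in the second half. Passing to singletons via the uniform lemma is precisely what removes it.

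The genuine gap is in effective properness, which you leave unfinished.

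\begin{itemize}
\item Your assertion that one sign sequence keeps the $(z_n,x_n)$-coefficient $c_n$ at least $\delta^2$ for infinitely many $n$ is not justified. Rademacher averaging gives $\mathbb E|c_n|^2\ge\delta^4$ separately for each $n$. Turning this into a single good sign sequence needs a further argument, for instance $\mathbb P(|c_n|^2\ge\delta^4/2)\ge\delta^4/2$ (because $|c_n|\le1$) followed by the reverse Fatou inequality for $\limsup$ of events. That yields $\delta^2/\sqrt2$, not $\delta^2$.
\item Your subsequence step invokes a fibre bound which, at that stage, must come from Bessel ($|f^{-1}(y)|\le\delta^{-2}$) rather than from coarse properties of $f$. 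You do not state this.
\end{itemize}

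None of this machinery is needed. Your fallback family is the right one; the missing observation is that Lemma~\ref{lem:uniform-subseries} covers singletons.

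\begin{itemize}
\item Cover $\{(w,y):d_Y(w,y)\le r\}$ by finitely many partial bijections $s_j$ and set $w_{j,A}=\sum_{y\in A}e_{s_j(y),y}$.
\item Then $A\mapsto U^*w_{j,A}U=\Phi^{-1}(qw_{j,A}q)\in\Cu(X)$ is additive, strongly continuous and bounded. The lemma with error $\delta^2/2$ therefore gives one radius $S_j$ valid for all $A$.
\item If $d_Y(f(x),f(z))\le r$, then $(f(z),f(x))$ lies in the graph of some $s_j$. The singleton operator $U^*e_{f(z),f(x)}U$ has $(z,x)$-coefficient $\langle\xi_x,\delta_{f(x)}\rangle\overline{\langle\xi_z,\delta_{f(z)}\rangle}$. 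This is a single term of modulus at least $\delta^2$, with no cross terms.
\item Hence $d_X(x,z)\le S_j$, exactly as in your expansion step.
\end{itemize}

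This is the paper's proof. It is direct, and needs no contradiction argument, subsequences, fibre bounds or random signs.
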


\begin{proof}
Write $\xi_x=U\delta_x$.  Fix $r>0$ and decompose the relation
\[
\{(z,x)\in X\times X:d_X(z,x)\leq r\}
\]
into the graphs of finitely many partial bijections $t_j\colon D_j\to X$.  For $A\subseteq D_j$, let
\[
v_{j,A}=\sum_{x\in A}e_{t_j(x),x}.
\]
These operators have norm at most one and propagation at most $r$, and $A\mapsto v_{j,A}$ is strongly continuous.  By Lemma~\ref{lem:spatial-implementation},
\[
Uv_{j,A}U^*=\Phi(v_{j,A})\in\Cu(Y).
\]
Lemma~\ref{lem:uniform-subseries}, with error $\delta^2/2$, supplies a propagation bound $R_j$ which works for all $A\subseteq D_j$.  For $x\in D_j$, choose an operator $S_{j,x}$ of propagation at most $R_j$ such that
\[
\bigl\|Uv_{j,\{x\}}U^*-S_{j,x}\bigr\|<\frac{\delta^2}{2}.
\]
The matrix coefficient of
\[
Uv_{j,\{x\}}U^*=|\xi_{t_j(x)}\rangle\langle\xi_x|
\]
at the pair $(f(t_j(x)),f(x))$ has absolute value at least $\delta^2$.  If $d_Y(f(t_j(x)),f(x))>R_j$, the corresponding coefficient of $S_{j,x}$ is zero, which contradicts the preceding norm estimate.  Hence
\[
d_Y(f(t_j(x)),f(x))\leq R_j.
\]
Taking the maximum over the finitely many $j$ shows that, for every $r$, there is $R$ such that
\begin{equation}\label{eq:bornologous-selector}
d_X(x,z)\leq r\quad\Longrightarrow\quad d_Y(f(x),f(z))\leq R.
\end{equation}

For the reverse control, decompose
\[
\{(w,y)\in Y\times Y:d_Y(w,y)\leq r\}
\]
into graphs of partial bijections $s_j\colon E_j\to Y$.  For $A\subseteq E_j$, put
\[
w_{j,A}=\sum_{y\in A}e_{s_j(y),y}.
\]
If $q=UU^*$, then
\[
qw_{j,A}q\in q\Cu(Y)q=\Phi(\Cu(X))
\]
and
\[
\Phi^{-1}(qw_{j,A}q)=U^*w_{j,A}U.
\]
The family $A\mapsto U^*w_{j,A}U$ is strongly continuous and uniformly bounded.  Applying Lemma~\ref{lem:uniform-subseries} in $X$, again with error $\delta^2/2$, gives a propagation bound $S_j$ for all its subseries.

If $d_Y(f(x),f(z))\leq r$, one of the partial bijections contains the ordered pair $(f(z),f(x))$.  Choose an approximant of propagation at most $S_j$ and norm error less than $\delta^2/2$ for the corresponding singleton operator
\[
U^*e_{f(z),f(x)}U.
\]
The matrix coefficient of the singleton operator at $(z,x)$ has absolute value at least $\delta^2$.  Therefore $d_X(x,z)\leq S_j$, since otherwise the corresponding coefficient of the approximant would vanish.  Taking the maximum over the finitely many partial-bijection pieces, we obtain a number $S$ depending only on $r$.  Thus
\begin{equation}\label{eq:proper-selector}
d_Y(f(x),f(z))\leq r\quad\Longrightarrow\quad d_X(x,z)\leq S.
\end{equation}

For completeness, define
\[
\rho_+(t)=\sup\{d_Y(f(x),f(z)):d_X(x,z)\leq t\}
\]
and
\[
\alpha(t)=\inf\{d_Y(f(x),f(z)):d_X(x,z)\geq t\},
\]
with the infimum of the empty set interpreted as $+\infty$.  Equation~\eqref{eq:bornologous-selector} makes $\rho_+$ finite, while \eqref{eq:proper-selector} implies $\alpha(t)\to\infty$.  Therefore
\[
\rho_-(t)=\min\{t,\alpha(t)\}
\]
is a finite nondecreasing function tending to infinity, and $\rho_-,\rho_+$ are control functions for $f$.
\end{proof}

The decisive strengthening is the following abstract injectivization result.  Hall matching has precedents in the rigidity literature cited above; the new input here is that the full family of diagonal projections $U\chi_AU^*$ supplies a uniform Hall condition under the sparse compact-ghost hypothesis.  This conclusion does not follow from coarse embeddability alone.

\begin{prop}\label{prop:hall-injectivization}
Under the hypotheses of Proposition~\ref{prop:coefficient-selector-coarse}, there exist a constant $R<\infty$ and an injective map
\[
g\colon X\longrightarrow Y
\]
such that
\[
d_Y(g(x),f(x))\leq R
\qquad(x\in X).
\]
Consequently, $g$ is an injective coarse embedding.
\end{prop}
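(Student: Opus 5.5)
The plan is to obtain $g$ from Hall's marriage theorem, in the infinite form that requires finite neighborhoods. For each $x\in X$, consider the finite set $N_R(x)=B_Y(f(x),R)$. An injective $g$ with $g(x)\in N_R(x)$ exists once the Hall inequality
\[
\Bigl|\bigcup_{x\in F}N_R(x)\Bigr|\geq|F|
\]
holds for every finite $F\subseteq X$. The infinite case reduces to the finite one by the standard compactness argument, since each $N_R(x)$ is finite by uniform local finiteness of $Y$ and $X$ is countable. Once $g$ is built, it is within distance $R$ of the coarse embedding $f$ from Proposition~\ref{prop:coefficient-selector-coarse}, so it is itself a coarse embedding. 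The whole task is therefore to find a single radius $R$ satisfying Hall's inequalities.

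To get $R$, I would apply Lemma~\ref{lem:uniform-subseries} to the family $T_A=U\chi_AU^*=\Phi(\chi_A)$, indexed by $A\subseteq X$, with error $\varepsilon=\delta^2/4$. This family is additive, strongly continuous and bounded by $1$. The lemma yields $R_1$ and, for each $A$, an operator $S_A$ with $\operatorname{prop}(S_A)\leq R_1$ and $\|U\chi_AU^*-S_A\|<\varepsilon$.

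Now fix a finite $F$. Put $P=U\chi_FU^*$, which is a projection of rank $|F|$, and let $G=\bigcup_{x\in F}B_Y(f(x),R)$. The aim is to show that $\chi_G$ captures most of the mass of $P$ near each point $f(x)$. For $x\in F$, the vector $\delta_{f(x)}$ satisfies $P\delta_{f(x)}=\sum_{z\in F}\overline{\langle\xi_z,\delta_{f(x)}\rangle}\,\xi_z$. Its inner product with $\delta_{f(x)}$ is at least $|\langle\xi_x,\delta_{f(x)}\rangle|^2\geq\delta^2$.

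Comparing with $S_F$, whose column at $f(x)$ is supported in $B(f(x),R_1)$, gives
\[
\|(1-\chi_G)P\delta_{f(x)}\|<\varepsilon
\]
once $R\geq R_1$. One would then like to argue that the operator $\chi_GP\chi_G$ has at least $|F|$ eigenvalues bounded away from $0$, which would give $|G|\geq\operatorname{rank}\geq|F|$. The main obstacle is that the vectors $P\delta_{f(x)}$ need not be spread uniformly enough to control the rank.

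A cleaner route is a trace argument. We have $\operatorname{tr}(\chi_GP)=\sum_{y\in G}\langle P\delta_y,\delta_y\rangle\leq|G|$, since $0\leq P\leq1$. It would therefore suffice to show $\operatorname{tr}((1-\chi_G)P)<\operatorname{tr}(P)-$ something, i.e.\ that $\sum_{x\in F}\|(1-\chi_G)\xi_x\|^2\leq\frac12|F|$ roughly. That bound alone does not give $|G|\geq|F|$, however; it only gives $|G|\geq|F|/2$. So I would instead establish Hall with a deficiency factor, $|G|\geq c|F|$, and then boost it.

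For the boosting step, I would enlarge the radius to $R'=R+R_2$, where $R_2$ comes from the control function \eqref{eq:bornologous-selector} and bounded geometry. I would then use a $k$-fold Hall variant: find a $k$-to-one assignment into balls of radius $R$, with $k=\lceil 1/c\rceil$, and compose with a map splitting each point's preimage. The inequality $|G|\geq c|F|$ for all finite $F$ gives, via Hall applied to $k$ copies of each $y$, a map $h\colon X\to Y$ with fibers of size at most $k$ and $d(h(x),f(x))\leq R$. Injectivity would then require $k$ distinct targets near each $h(x)$. I would try first to show that $\operatorname{rank}$ considerations make $c=1-\varepsilon'$, so that the ceiling is exactly $1$. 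This is the step I expect to be hardest: getting $|G|\geq|F|$ exactly.

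My concrete attempt would be to use the near-projection $\chi_G$ together with $\|P-\chi_GP\chi_G\|$ small on $\operatorname{ran}P$. I would show that $\|\xi-\chi_G\xi\|\leq\frac12$ for every unit vector $\xi\in\operatorname{ran}P$, not just for the $\xi_x$, using a Rademacher averaging over signed sums $\sum_{x\in F}\epsilon_x\xi_x$ and the uniform bound for $S_A$ applied to all subsets $A\subseteq F$. Then $\chi_G$ is injective on $\operatorname{ran}P$, which forces $|G|=\dim\ell^2(G)\geq|F|$.
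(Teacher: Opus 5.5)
\textbf{There is a genuine gap: the decisive step is only announced.} Your reduction is the same as the paper's. You apply Lemma~\ref{lem:uniform-subseries} to $P_A=U\chi_AU^*$ to get a single radius. You then obtain Hall's inequality $|N_R(F)|\ge|F|$ from the injectivity of $\chi_{N_R(F)}$ on the $|F|$-dimensional space $\operatorname{ran}P_F$, and finish with Hall, compactness, and the bounded perturbation of $f$. But that injectivity is the entire content of the proposition, and your proposal never proves it.

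None of the estimates you actually derive yields it:
\begin{itemize}
\item The bound $\|(1-\chi_G)P_F\delta_{f(x)}\|<\varepsilon$ concerns only the columns of $P_F$ at the points $f(x)$. It says nothing about vectors of $\operatorname{ran}P_F$ that vanish at every $f(x)$, equivalently that are orthogonal to every $P_F\delta_{f(x)}$. Those are exactly the vectors that must be excluded.
\item The trace bound gives only $|G|\ge|F|/2$.
\item The $k$-fold Hall boosting gives at best a $k$-to-one map; note that $\lceil 1/(1-\varepsilon')\rceil=2$, not $1$.
\item ``Rademacher averaging'' alone controls $\|(1-\chi_G)\sum_x\epsilon_xa_x\xi_x\|$ only for a typical sign pattern. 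It does not control the all-plus pattern, i.e.\ $\xi$ itself.
\end{itemize}

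\textbf{The missing idea is the sign-flip identity.} For $\zeta=\sum_{x\in F}a_x\xi_x$ and $A=\{x:\epsilon_x=1\}$,
\[
\sum_x\epsilon_xa_x\xi_x=(2P_A-P_F)\zeta .
\]
This lets the uniform propagation bound for all the $P_A$, $A\subseteq F$, act on a fixed vector.

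The paper uses it by contradiction. Suppose $|N_R(F)|<|F|$, take a unit vector $\zeta\in\operatorname{ran}P_F$ vanishing on $N_R(F)$, and put $e=\chi_{f(F)}$. Then $e\zeta=0$, and suitable Rademacher signs give $\|e(2P_A-P_F)\zeta\|\ge\delta$, hence $\|eP_A\zeta\|\ge\delta/2$. On the other hand, the propagation-$R$ approximant $T_A$ of $P_A$ satisfies $eT_A\zeta=0$, since $\zeta$ lives at distance $>R$ from $f(F)$. This forces $\|eP_A\zeta\|<\delta/4$, a contradiction.

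Your quantitative claim can also be proved with the same identity, but at extra cost:
\begin{enumerate}
\item Localize each $\xi_x$ via the singleton $P_{\{x\}}$, not via $P_F$, to get $\|(1-\chi_{B_Y(f(x),R_1)})\xi_x\|<\varepsilon/\delta$.
\item Choose signs with $\|(1-\chi_{N_{R_1}(F)})\xi_\epsilon\|<\varepsilon/\delta$, where $\xi_\epsilon=\sum_x\epsilon_xa_x\xi_x$.
\item Write $\xi=(2P_A-P_F)\xi_\epsilon$ and use the propagation of $2S_A-S_F$. This gives $\|(1-\chi_G)\xi\|<3\varepsilon+(1+3\varepsilon)\varepsilon/\delta$ only for the doubled radius $G=N_{2R_1}(F)$.
\end{enumerate}
So this route needs the larger radius and $\varepsilon$ small relative to $\delta$, for instance $\varepsilon\le\delta/12$. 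With your choice $\varepsilon=\delta^2/4$, this estimate does not give $\le\frac12$ when $\delta$ is close to $1$. The paper's kernel-vector argument needs neither adjustment.
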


\begin{proof}
For $A\subseteq X$, set
\[
P_A=U\chi_AU^*=\Phi(\chi_A)\in\Cu(Y).
\]
The map $A\mapsto P_A$ is strongly continuous: this is immediate on finitely supported vectors, and the uniform bound $\|P_A\|\leq1$ gives the general case by approximation.  Lemma~\ref{lem:uniform-subseries}, applied with error $\delta/4$, gives $R<\infty$ and, for every $A\subseteq X$, an operator $T_A$ such that
\begin{equation}\label{eq:uniform-projection-approximation}
\operatorname{prop}(T_A)\leq R,
\qquad
\|P_A-T_A\|<\frac{\delta}{4}.
\end{equation}

For a finite set $F\subseteq X$, put
\[
N_R(F)=\bigcup_{x\in F}B_Y(f(x),R).
\]
We claim that
\begin{equation}\label{eq:uniform-hall}
|N_R(F)|\geq|F|
\qquad(F\subseteq X\text{ finite}).
\end{equation}
Suppose not.  Since $\operatorname{rank}(P_F)=|F|$, the restriction of $\chi_{N_R(F)}$ to $\operatorname{ran}(P_F)$ has nonzero kernel.  Choose a unit vector
\begin{equation}\label{eq:hall-kernel-vector}
\zeta\in\operatorname{ran}(P_F)\cap\ell^2(N_R(F))^\perp
\end{equation}
and write
\[
\zeta=\sum_{x\in F}a_x\xi_x,
\qquad
\sum_{x\in F}|a_x|^2=1.
\]
Let $e=\chi_{f(F)}$.  Since $f(F)\subseteq N_R(F)$ and $\zeta\perp\ell^2(N_R(F))$, one has $e\zeta=0$.  For independent Rademacher signs $(\varepsilon_x)_{x\in F}$,
\[
\begin{aligned}
\mathbb E\left\|e\sum_{x\in F}\varepsilon_xa_x\xi_x\right\|^2
&=\sum_{x\in F}|a_x|^2\|e\xi_x\|^2\\
&\geq\sum_{x\in F}|a_x|^2
  |\langle\xi_x,\delta_{f(x)}\rangle|^2\\
&\geq\delta^2.
\end{aligned}
\]
Choose signs for which the displayed norm is at least $\delta$, and put
\[
A=\{x\in F:\varepsilon_x=1\}.
\]
Since $P_F\zeta=\zeta$,
\[
\sum_{x\in F}\varepsilon_xa_x\xi_x=(2P_A-P_F)\zeta.
\]
Using $e\zeta=0$, we obtain
\begin{equation}\label{eq:rademacher-lower-bound}
\|eP_A\zeta\|\geq\frac{\delta}{2}.
\end{equation}

On the other hand, \eqref{eq:hall-kernel-vector} says that $\zeta$ is supported outside $N_R(F)$, whereas $e$ is supported on $f(F)$.  Every point outside $N_R(F)$ is at distance greater than $R$ from $f(F)$, so the propagation bound in \eqref{eq:uniform-projection-approximation} gives $eT_A\zeta=0$.  Hence
\[
\|eP_A\zeta\|
\leq\|(P_A-T_A)\zeta\|
<\frac{\delta}{4},
\]
contradicting \eqref{eq:rademacher-lower-bound}.  This proves \eqref{eq:uniform-hall}.

Consider the bipartite graph with left vertex set $X$, right vertex set $Y$, and
\[
x\sim y\quad\Longleftrightarrow\quad d_Y(f(x),y)\leq R.
\]
Its left neighborhoods are finite by uniform local finiteness, and \eqref{eq:uniform-hall} is precisely Hall's condition.  For every finite $E\subseteq X$, the finite Hall theorem gives a matching of $E$ into $Y$.  To pass to all of $X$, consider the compact product
\[
\Omega=\prod_{x\in X}B_Y(f(x),R).
\]
For each finite $E\subseteq X$, let $\Omega_E\subseteq\Omega$ be the closed
set of maps which are injective on $E$. Indeed, for finite sets
$E_1,\ldots,E_k\subseteq X$, the finite Hall theorem gives a matching on
$E_1\cup\cdots\cup E_k$; extending it by the values $f(x)$ outside this
finite union gives a point of
$\Omega_{E_1}\cap\cdots\cap\Omega_{E_k}$, since no injectivity condition is
imposed outside the finite union. Thus the family
$(\Omega_E)_{E\subseteq X,\,E\text{ finite}}$ has the finite intersection
property. property. Compactness therefore gives a point in their intersection.
Equivalently, there is an injective map $g\colon X\to Y$ satisfying
$g(x)\in B_Y(f(x),R)$; compare the compactness argument in the proof of
\cite[Lemma~7.2]{Zha26}.

Finally, $g$ is a uniformly bounded perturbation of the coarse embedding $f$.  If $\rho_-,\rho_+$ control $f$, then
\[
\widetilde\rho_+(t)=\rho_+(t)+2R,
\qquad
\widetilde\rho_-(t)=\max\{0,\rho_-(t)-2R\}
\]
control $g$.  Thus $g$ is an injective coarse embedding.
\end{proof}

\medskip
We can now assemble the preceding results.

\begin{proof}[Proof of Theorem~\ref{thm:compact-ghost-rigidity}]
Apply Lemma~\ref{lem:spatial-implementation} to write $\Phi=\operatorname{Ad}(U)$.  Lemma~\ref{lem:uniform-coefficient} gives a number $\delta_0>0$ such that
\[
\|U\delta_x\|_\infty\geq\delta_0
\qquad(x\in X).
\]
Choose $0<\delta<\delta_0$ and, for each $x\in X$, choose $f(x)\in Y$ satisfying
\[
|\langle U\delta_x,\delta_{f(x)}\rangle|\geq\delta.
\]
Proposition~\ref{prop:coefficient-selector-coarse} shows that $f$ is a coarse embedding.  Proposition~\ref{prop:hall-injectivization} replaces it by an injective coarse embedding at uniformly bounded distance.
\end{proof}

\medskip
We finish by passing from the global compact-ghost hypothesis to the sparse one.

\begin{proof}[Proof of Corollary~\ref{cor:global-ghost-rigidity}]
Let $Z\subseteq Y$.  Extension by zero defines an injective $*$-homomorphism
\[
j_Z\colon\Cu(Z)\longrightarrow\Cu(Y),
\qquad
j_Z(T)=T\oplus0
\]
with respect to $\ell^2(Y)=\ell^2(Z)\oplus\ell^2(Y\setminus Z)$.  Indeed, zero extension preserves propagation and hence finite-propagation approximation.  It also preserves ghost projections, and
\[
T\in\cK(\ell^2(Z))
\quad\Longleftrightarrow\quad
j_Z(T)\in\cK(\ell^2(Y)).
\]
Thus every subspace of $Y$, in particular every sparse subspace, yields only compact ghost projections.  The conclusion follows from Theorem~\ref{thm:compact-ghost-rigidity}.
\end{proof}

\end{document}